\newcommand{\p}[1]{{\mathbb{P}^{#1}}}
\newcommand{\op}[1]{{\mathcal O}_{\mathbb{P}^{#1}}}
\newcommand{\cala}{{\mathcal A}}
\newcommand{\calb}{{\mathcal B}}
\newcommand{\cale}{{\mathcal E}}
\newcommand{\calh}{{\mathcal H}}
\newcommand{\cali}{{\mathcal I}}
\newcommand{\calm}{{\mathcal M}}
\newcommand{\caln}{{\mathcal N}}
\newcommand{\calo}{{\mathcal O}}
\newcommand{\simto}{\stackrel{\sim}{\to}}
\DeclareMathOperator{\rk}{{rk}}
\newlength{\rrrr}
\newcommand{\intoo}[1]{\:
\xymatrix@1{\ar@{^(->}[r]^{#1}&}\:}
\newcommand{\ontoo}[1]{\:
\xymatrix@1{\ar@{->>}[r]^{#1}&}\:}
\newtheorem{theorem}{Theorem}
\newtheorem{proposition}[theorem]{Proposition}
\newtheorem{remark}[theorem]{Remark}
\begin{document}

\title{Construction of stable rank 2 vector bundles on $\mathbb{P}^3$ via symplectic bundles}

\author{Alexander Tikhomirov}
\address{Faculty of Mathematics\\
National Research University  
Higher School of Economics\\
6 Usacheva Street\\ 
119048 Moscow, Russia}
\email{astikhomirov@mail.ru}

\author{Sergey Tikhomirov}
\address{Faculty of Mathematics and Physics,
Yaroslavl State Pedagogical University named after K.D.Ushinskii,
108 Respublikanskaya Street,
150000 Yaroslavl, Russia\\ 
${\ \ \ \ }$Koryazhma Branch of Northern (Arctic)
Federal University named after M.V.Lomonosov\\
9 Lenin Avenue\\
165651 Koryazhma, Russia}
\email{satikhomirov@mail.ru}

\author{Danil Vasiliev}
\address{Faculty of Mathematics\\
National Research University  
Higher School of Economics\\
6 Usacheva Street\\ 
119048 Moscow, Russia}
\email{davasilev@edu.hse.ru}

\begin{abstract}
In this article we study the Gieseker-Maruyama moduli spaces $\mathcal{B}(e,n)$ of stable rank 2 algebraic vector bundles with Chern classes $c_1=e\in\{-1,0\},\ c_2=n\ge1$ on the projective space $\mathbb{P}^3$. We construct two new infinite series $\Sigma_0$ and $\Sigma_1$ of irreducible components of the
spaces $\mathcal{B}(e,n)$ for $e=0$ and $e=-1$, respectively. General bundles of these components are obtained as cohomology sheaves of monads, the middle term of which is a rank 4 symplectic instanton bundle in case $e=0$, respectively, twisted symplectic bundle in case $e=-1$. We show that the series $\Sigma_0$ contains components for all big enough values of $n$ (more precisely, at least for $n\ge146$). $\Sigma_0$ yields the next example, after the series of instanton components, of an infinite series of components of $\mathcal{B}(0,n)$ satisfying this property.

\noindent{\bf 2010 MSC:} 14D20, 14E08, 14J60

\noindent{\bf Keywords:} rank 2 bundles, moduli of stable bundles, symplectic bundles

\end{abstract}

\maketitle

\section{Introduction}\label{section 1}

For $e\in\{-1,0\}$ and $n\in\mathbb{Z}_+$ let $\mathcal{B}(e,n)$ be the Gieseker-Maruyama moduli space of stable rank 2 algebraic vector bundles with Chern classes $c_1=e,\ c_2=n$ on the projective space $\mathbb{P}^3$. R.~Hartshorne \cite{H-vb} showed that $\mathcal{B}(e,n)$ is a quasi-projective scheme, nonempty for arbitrary $n\ge1$ in case $e=0$ and, respectively, for even $n\ge2$ in case $e=-1$, and the deformation theory predicts that each irreducible component of $\calb(e,n)$ has dimension at least $8n-3+2e$.  

In case $e=0$ it is known by now (see \cite{H-vb}, \cite{ES}, \cite{B1}, \cite{CTT}, \cite{JV}, \cite{T1}, \cite{T2}) that the scheme $\mathcal{B}(0,n)$ contains an irreducible component $I_n$ of expected dimension $8n-3$, and this component is the closure of the smooth open subset of $I_n$ constituted by the so-called  mathematical instanton vector bundles. Historically, $\{I_n\}_{n\ge1}$ was the first known infinite series of irreducible components of $\mathcal{B}(0,n)$ having the expected dimension $\dim I_n=8n-3$. In \cite[Ex. 4.3.2]{H-vb} R.~Hartshorne constructed a first infinite series $\{\mathcal{B}_0(-1,2m)\}_{m\ge1}$ of irreducible components $\mathcal{B}_0(-1,2m)$ of $\mathcal{B}(-1,2m)$  having the expected dimension $\dim\mathcal{B}_0(-1,2m)=16m-5$. 

The other infinite series of families of vector bundles of dimension $3k^2+10k+8$ from 
$\calb(0,2k+1)$ was constructed in 1978 by W.~Barth and K.~Hulek \cite{BH}, and G.~Ellingsrud and S.~A.~Str\o mme in \cite[(4.6)-(4.7)]{ES} showed that these families are open subsets of irreducible componens distint from the instanton components $I_{2k+1}$. 
Later in 1985-87 V.~K.~Vedernikov \cite{V1} and \cite{V2} constructed two infinite series of families of bundles from
$\calb(0,n)$, and one infinite family of bundles from $\calb(-1,2m)$. A more general series of rank 2 bundles depending on triples of integers $a,b,c$, appeared in 1984 in the paper of A. Prabhakar Rao \cite{Rao}. Soon after that, in 1988, L.~Ein \cite{Ein} independently studied these bundles  and proved that they constitute open parts of irreducible components of $\calb(e,n)$ for both $e=0$ and $e=-1$.

A new progress in the description of the spaces $\calb(0,n)$
was achieved in 2017 by J.~Almeida, M.~Jardim, A.~Tikhomirov and
S.~Tikhomirov in \cite{AJTT}, where they constructed a new infinite series of irreducible components $Y_a$ of the spaces $\calb(0,1+a^2)$ for $a\in\{2\}\cup\mathbb{Z}_{\ge4}$. These components have dimensions 
$\dim Y_a=4\binom{a+3}{3}-a-1$ which is larger than expected.
General bundles from these components are obtained as cohomology bundles of rank 1 monads, the middle term of which
is a rank 4 symplectic instanton with $c_2=1$, and the lefthand and the righthand terms are $\op3(-a)$ and $\op3(a)$, respectively.

The aim of present article is to provide two new infinite series of irreducible components $\calm_n$ of $\calb_(e,n)$, one for $e=0$ and another for $e=-1$ which in some sense generalizes the above construction from \cite{AJTT}.  
Namely, in case $e=0$ we construct an infinite series $\Sigma_0$  of irreducible components $\calm_n$ of $\calb(0,n)$, such that a general bundle of $\calm_n$ is a cohomology bundle of a monad of the type similar to the above, the middle term of which is a rank 4 symplectic instanton with arbitrary second Chern class. The first main result of the article, Theorem \ref{Thm A}, states that the
series $\Sigma_0$ contains components $\calm_n$ for all big
enough values of $n$ (more precisely, at least for $n\ge146$). The series $\Sigma_0$ is a first example, besides the instanton series $\{I_n\}_{n\ge1}$, of the series with this property. (For all the other series mentioned above the question whether they contain components with all big enough values of second Chern class $n$ is open.) 

In case $e=-1$ we construct in a similar way an infinite series $\Sigma_1$ of irreducible components $\calm_n$ of $\calb(-1,n)$, such that a general bundle of $\calm_n$ is a cohomology bundle of a monad of the type similar to the above, in which the lefthand and the righthand terms are $\op3(-a-1)$ and $\op3(a)$, respectively, and the middle term is a twisted symplectic rank 4 bundle with first Chern
class -2. The second main result of the article, Theorem \ref{Thm B}, states that $\Sigma_1$ contains components  $\calm_n$ asymptotically for almost all big enough values of
$n$. (A precise statement about the behaviour of the set
of values of $n$ for which $\calm_n$ is contained in $\Sigma_1$ is given in Remark \ref{Rem B1}). 

Now give a brief sketch of the contents of the article. 
In Section \ref{section 2} we study some properties of pairs $([\cale_1],[\cale_2])$ of mathematical instanton bundles  
and prove the vanishing of certain cohomology groups of their
twists by line bundles $\op3(a)$ and $\op3(-a)$ (see Propositon \ref{Prop 1}). The direct sum $\mathbb{E}=
\cale_1\oplus\cale_2$ is then used in Section \ref{section 3}
as a test rank 4 symplectic instanton bundle. This bundle and
its deformations are used as middle terms of anti-self-dual monads of the form $0\to\op3(-a)\to\mathbb{E}\to\op3(a)\to0$,
the cohomology bundles of which provide general bundles of
the components $\calm_n$ of of the series $\Sigma_0$ (see Theorem \ref{Thm A}). In Section \ref{section 4} we study
direct sums $\mathbb{E}=\cale_1\oplus\cale_2$ of vector bundles, $\cale_i$ are the bundles from the R.~Hartshorne series $\{\calb_0(-1,2n)\}_{n\ge1}$ mentioned above. We prove
certain vanishing properties for cohomology of twists of 
$\cale_i$ (see Proposition \ref{Prop 2}). These properties are then used in Theorem \ref{Thm B} in the construction of general vector bundles of coomponents $\calm_n$ of the series
$\Sigma_1$. In Section \ref{section 5} we give a list of
components $\calm_n\in\Sigma_0$ for $n\le20$ and of
components $\calm_n\in\Sigma_1$ for $n\le40$.

\noindent
\textbf{Conventions and notation}.
\begin{itemize}
\item Everywhere in this paper we work over the base field  of complex numbers $\mathbf{k}=\mathbb{C}$.
\item $\mathbb{P}^3$ is a projective 3-space over $\mathbf{k}$.
\item For a stable rank 2 vector bundle $E$ with $c_1(E)=e$, $c_2(E)=n$ on $\p3$, we denote by $[E]$ its isomorphism class in $\calb(e,n)$. 
\end{itemize}

\noindent
\textbf{Acknowledgements}.
A.~Tikhomirov was supported by the Academic Fund Program at the National Research University Higher School of Economics (HSE) in 2018-2019 
(grant no. 18-01-0037) and by the Russian Academic Excellence Project "5-100". He also acknowledges the hospitality of the Max Planck Institute for Mathematics in Bonn, where this work was partially done during the winter of 2017.

\vspace{5mm}

\section{Some properties of mathematical instantons}
\label{section 2}
 
\vspace{2mm}
Let $a$ and $m$ be two positive integers, where $a\ge2$, and
let $\varepsilon\in\{0,1\}$. In this section we prove the following proposition about mathematical instanton vector bundles which will be used in the proof of Theorem 
\ref{Thm A}.
\begin{proposition}\label{Prop 1}
A general pair
\begin{equation}\label{2 inst}
([\cale_1],[\cale_2])\in I_m\times I_{m+\varepsilon},
\end{equation}
of instanton vector bundles satisfies the following conditions:
\begin{equation}\label{not equal}
[\cale_1]\ne[\cale_2];
\end{equation}
for $i=1,\ m\le a+1,$ respectively, $i=2,\ m+\varepsilon\le a+1$, 
\begin{equation}\label{vanish 1}
h^1(\cale_i(a))=0,
\end{equation}
\begin{equation}\label{vanish 2}
h^2(\cale_i(-a))=0,\ \ \ if\ \ \ a\ge12;
\end{equation}
for $i=1,\ m\le a-4,\ a\ge5,$ respectively, $i=2,\ m+\varepsilon\le a-4,\ a\ge5,$
\begin{equation}\label{vanish 2a}
h^2(\cale_i(-a))=0;
\end{equation}
for $j\ne1$,
\begin{equation}\label{vanish 3}
h^j(\cale_1\otimes\cale_2)=0.
\end{equation}
\end{proposition}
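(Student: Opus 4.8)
The plan is to reduce the statement to the existence of a single well-chosen pair and then to check everything on \,'t Hooft bundles. First I would note that each of \eqref{not equal}--\eqref{vanish 3} is an open condition on the variety $I_m\times I_{m+\varepsilon}$, which is irreducible by the theorems cited in Section \ref{section 1}: conditions \eqref{vanish 1}--\eqref{vanish 3} are open by semicontinuity of cohomology, and for \eqref{not equal} one uses that $\cale_1$ and $\cale_2$ are stable of the same slope $0$, so that $\cale_1\cong\cale_2$ if and only if $\hom(\cale_1,\cale_2)\ge 1$, a closed condition. Hence it is enough to exhibit one pair $([\cale_1],[\cale_2])\in I_m\times I_{m+\varepsilon}$ realizing all of \eqref{not equal}--\eqref{vanish 3} simultaneously. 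Writing $n_1=m$, $n_2=m+\varepsilon$, I would take $\cale_i$ to be a \,'t Hooft bundle: choose, for $i=1,2$, a disjoint union $Z_i\subset\p3$ of $n_i+1$ lines with $Z_1\cup Z_2$ in general position, and let $\cale_i$ sit in a locally free extension $0\to\op3(-1)\to\cale_i\to\cali_{Z_i}(1)\to 0$. Such an extension exists over any disjoint union of lines; $\cale_i$ is then a mathematical instanton with $c_2(\cale_i)=n_i$, so $[\cale_i]\in I_{n_i}$. Condition \eqref{not equal} is automatic: if $\varepsilon=1$ the second Chern classes differ, and if $\varepsilon=0$ the two bundles are built on different generic line configurations, hence are not isomorphic.

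Next I would convert \eqref{vanish 1}, \eqref{vanish 2} and \eqref{vanish 2a} into postulation statements about the $Z_i$. Twisting the defining extension by $t$ and using $H^1(\op3(k))=H^2(\op3(k))=0$ for all $k$, one gets $h^1(\cale_i(t))=h^1(\cali_{Z_i}(t+1))$; and since $\cale_i^{\dual}\cong\cale_i$ and $\omega_{\p3}=\op3(-4)$, Serre duality gives $h^2(\cale_i(-a))=h^1(\cale_i(a-4))=h^1(\cali_{Z_i}(a-3))$. Now $h^1(\cali_{Z_i}(s))=\coker\bigl(H^0(\op3(s))\to H^0(\calo_{Z_i}(s))\bigr)$, and by the Hartshorne--Hirschowitz theorem that a general union of lines in $\p3$ has maximal rank, this cokernel vanishes whenever $h^0(\calo_{Z_i}(s))=(n_i+1)(s+1)\le\binom{s+3}{3}$ (and $s\ge 0$). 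I would then verify this elementary polynomial inequality in each of the three regimes: $s=a+1$ with $n_i\le a+1$ for \eqref{vanish 1}; $s=a-3$ with $n_i\le a+1$ and $a\ge 12$ for \eqref{vanish 2}; $s=a-3$ with $n_i\le a-4$ and $a\ge 5$ for \eqref{vanish 2a}. In each case it comes down to a comparison that holds throughout the stated range.

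It remains to treat \eqref{vanish 3}. Since $\cale_1^{\dual}\cong\cale_1$ we have $\cale_1\otimes\cale_2\cong\inhom(\cale_1,\cale_2)$, so $h^0(\cale_1\otimes\cale_2)=\hom(\cale_1,\cale_2)=0$ by \eqref{not equal} and stability, and $h^3(\cale_1\otimes\cale_2)=h^0\bigl((\cale_1\otimes\cale_2)(-4)\bigr)\le h^0(\cale_1\otimes\cale_2)=0$ by Serre duality. For $h^2$, I would tensor the defining extension of $\cale_1$ by the locally free $\cale_2$ to get $0\to\cale_2(-1)\to\cale_1\otimes\cale_2\to\cali_{Z_1}\otimes\cale_2(1)\to 0$, reducing the claim to $h^2(\cale_2(-1))=0$ and $h^2(\cali_{Z_1}\otimes\cale_2(1))=0$. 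The first equals $h^1(\cale_2(-3))=h^1(\cali_{Z_2}(-2))=0$ by the computation above. For the second, tensoring $0\to\cali_{Z_1}\to\op3\to\calo_{Z_1}\to 0$ by $\cale_2(1)$ reduces it to $h^2(\cale_2(1))=h^1(\cale_2(-5))=h^1(\cali_{Z_2}(-4))=0$ together with $H^1(\cale_2(1)|_{Z_1})=0$; the latter holds because, by Grauert--M\"{u}lich and the general position of $Z_1\cup Z_2$, the lines making up $Z_1$ are non-jumping lines of $\cale_2$, so $\cale_2(1)|_{Z_1}$ is a direct sum of copies of $\op1(1)$.

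I expect the main obstacle to be \eqref{vanish 3} in the case $j=2$: it is the only condition that genuinely couples the two bundles, and making it work forces one to choose $Z_1$ generically with respect to the already-fixed bundle $\cale_2$ (to avoid its jumping lines). Thus the order in which the data $(Z_2,\cale_2,Z_1,\cale_1)$ is selected matters, and one must check that a simultaneously generic choice exists, which it does because the jumping locus of $\cale_2$ is a proper closed subset of the Grassmannian of lines in $\p3$. By contrast, \eqref{vanish 1}--\eqref{vanish 2a} are ``uniform'' in each bundle separately and follow from the classical postulation of general line arrangements, while \eqref{not equal} and the cases $j=0,3$ of \eqref{vanish 3} are formal consequences of stability and self-duality.
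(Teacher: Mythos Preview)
Your proposal is correct, and for the mixed vanishing \eqref{vanish 3} and the conditions \eqref{not equal} (together with the cases $j=0,3$ of \eqref{vanish 3}) it coincides with the paper's argument almost verbatim: both of you tensor the Serre extension of $\cale_1$ by $\cale_2$, reduce to $h^2(\cale_2(\pm1))=0$ plus the splitting of $\cale_2$ on the lines of $Z_1$, and invoke Grauert--M\"ulich for the latter.

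Where you genuinely diverge from the paper is in the treatment of \eqref{vanish 1}, \eqref{vanish 2a} and especially \eqref{vanish 2}. The paper places the $m+1$ lines in a single ruling of a smooth quadric, so that $h^1(\cali_Y(s))$ can be read off directly from the K\"unneth formula on $\p1\times\p1$; this gives \eqref{vanish 1} and \eqref{vanish 2a} at once, but it does \emph{not} give \eqref{vanish 2} in the critical range $a-3\le m\le a+1$. For that range the paper performs up to five elementary transformations along extra lines on a second quadric, shows the resulting sheaf $E$ still satisfies $h^1(E(a-4))=0$, and then appeals to \cite{JMT} to prove that $[E]$ lies in the closure $\bar I_{m+d}$, finishing by semicontinuity. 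Your route sidesteps all of this by taking the lines in \emph{general} position and invoking the Hartshorne--Hirschowitz maximal-rank theorem; the surjectivity of $H^0(\op3(a-3))\to H^0(\calo_{Z_i}(a-3))$ then reduces to the elementary inequality $(n_i+1)(a-2)\le\binom{a}{3}$, which indeed holds for $n_i\le a+1$ as soon as $a\ge 9$ (in particular for $a\ge 12$). What you gain is a much shorter and uniform argument that avoids the degeneration machinery; what the paper gains is independence from the Hartshorne--Hirschowitz theorem, at the cost of the more delicate input from \cite{JMT}. Both are valid; yours is the more efficient path here.
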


\begin{proof} It is clearly enough to treat the case $i=1$, as the case $i=2$ is treated completely similarly.
Consider two instanton vector bundles such that
the condition (\ref{not equal}) can be evidently achieved. Show that the condition (\ref{vanish 1}) can also be satisfied for general bundles $[\cale_1]\in I_m$ and $[\cale_2]\in I_{m+\varepsilon}$. For this, consider a smooth
quadric surface $S\in\p3$, together with an isomorphism
$S\simeq\p1\times\p1$, and let 
$Y=\overset{m+1}{\underset{i=1}{\sqcup}}l_i$ be a union of $m+1$ distinct projective lines $l_i$ in $\p3$ belonging to one of the two rulings on $S$. Considering $Y$ as a reduced scheme, we have 
$\cali_{Y,S}\simeq\op1(-m-1)\boxtimes\op1$. Thus the exact triple $0\to\cali_{S,\p3}\to\cali_{Y,\p3}\to\cali_{Y,S}\to0$ can be rewritten as
\begin{equation}\label{triple of ideals}
0\to\op3(-2)\to\cali_{Y,\p3}\to\op1(-m-1)\boxtimes\op1\to0.
\end{equation}
Tensor multiplication of (\ref*{triple of ideals}) by $\op3(a+1)$, respectively, by $\op3(a-3)$ 
yields an exact triple
\begin{equation}\label{triple(a)}
0\to\op3(a-1)\to\cali_{Y,\p3}(a+1)\to\op1(a-m)\boxtimes\op1
(a+1)\to0.
\end{equation}
\begin{equation}\label{triple(-a)}
0\to\op3(a-5)\to\cali_{Y,\p3}(a-3)\to\op1(a-4-m)\boxtimes
\op1(a-3)\to0.
\end{equation}
By the K\"unneth formula $h^1(\op1(a-m)\boxtimes\op1(a+1))=0$ for $a\ge2$ and $m\le a+1$, and (\ref{triple(a)}) implies that
\begin{equation}\label{vanish 4}
h^1(\cali_{Y,\p3}(a+1))=0.
\end{equation}
Now consider an extension of $\op3$-sheaves of the form
\begin{equation}\label{trC}
0\to\op3(-1)\to\cale_1\to\cali_{Y,\p3}(1)\to0.
\end{equation}
Such extensions are classified by the vector space $V=\mathrm{Ext}^1(\cali_{Y,\p3}(1),\op3(-1))$, and it is
known that, for a general point $\xi\in V$ the extension sheaf
$\cale_1$ in (\ref{trC}) is a locally free  instanton sheaf  from $I_m$(see, e. g., \cite{NT}) called a {\it 't Hooft instanton}. Now tensoring the triple \eqref{trC} with $\op3(a)$ and passing to cohomology, in view of 
\eqref{vanish 4} we obtain (\ref{vanish 1}) for $i=1$.

To prove \eqref{vanish 2a}, assume that $m\le a-4$; then
similar to \eqref{vanish 4} we obtain using \eqref{triple(-a)}:
\begin{equation}\label{vanish 5}
h^1(\cali_{Y,\p3}(a-3))=0,\ \ \ m\le a-4.
\end{equation}
Tensoring \eqref{trC} with $\op3(a-4)$ we obtain the triple 
\begin{equation}\label{trC new}
0\to\op3(a-5)\to\cale_1(a-4)\xrightarrow{\varepsilon}\cali_{Y,\p3}(a-3)\to0.
\end{equation}
From \eqref{vanish 5} and \eqref{trC new} it follows that
\begin{equation}\label{vanish 6}
h^1(\cale_1(a-4))=0,\ \ \ m\le a-4.
\end{equation}
This together with Serre duality for $\cale_1$ yields \eqref{vanish 2a} for $i=1$.

To prove \eqref{vanish 2}, consider a smooth quadric surface $S'\subset\p3$, together with an isomorphism $S'\simeq\p1\times\p1$, and let $Z=\overset{d}{\underset{i=1}{\sqcup}}\tilde{l}_i$ be a union of $d$ distinct projective lines $\tilde{l}_i$ in $\p3$, belonging to one of the two rulings on $S'$, where $1\le d\le 5$. Considering $Z$ as a reduced scheme, we have 
$\cali_{Z,S'}\simeq\op1(-d)\boxtimes\op1$.
Without loss of generality we may assume that $Z\cap Y=
\emptyset$ and that $Z$ intersects the quadric surface $S$ treated above in $2d$ distinct points $x_1,...,x_{2d}$ such that the points $\mathrm{pr}_2(x_i),\ i=1,...,2d,$ are also distinct, where $\mathrm{pr}_2: S\simeq\p1\times\p1\to\p1$ is the projection onto the second factor. Tensoring the exact triple \eqref{triple of ideals} with $\op3(a-3)$ and restricting it onto $Z$ we obtain a commutative diagram of exact triples 
\begin{equation}\label{diagr 1}
\xymatrix{
& 0 & 0 & 0 &\\
0\ar[r] &\calo_Z(a-5)\ar[r]\ar[u] & 
\calo_Z(a-3)\ar[r]\ar[u] & \underset{1}{\overset{2d}{\oplus}}~\mathbf{k}_{x_j}\ar[u]\ar[r] & 0\\
0\ar[r]& \op3(a-5)\ar[r]\ar^-f[u] & \cali_{Y,\p3}(a-3)\ar[r]
\ar_-{g}[u] &\op1(a-m-4)\boxtimes\op1(a-3)
\ar^-h[u]\ar[r] & 0,}
\end{equation}
where $f,\ g$ and $h$ are the restriction maps.
The sheaf $\ker f=\cali_{Z,\p3}(a-5)$ similar to \eqref{triple(a)} satisfies the exact triple
\begin{equation*}\label{triple(b)}
0\to\op3(a-5)\to\ker f\to\op1(a-d-5)\boxtimes\op1(a-5)\to0.
\end{equation*}
Passing to cohomology of this triple we obtain in view of the conditions $1\le d\le 5$ and $a\ge12$ that $h^1(\ker f)=0$,
i. e. 
$$
h^0(f):\ H^0(\op3(a-5))\to H^0(\calo_Z(a-5))
$$
is an epimorphism. On the other hand, we have: i) $a-m-4\ge0$, 
ii) $a-3\ge2d-1$, since $a\ge12$ and $d\le5$, and iii) the points $\mathrm{pr}_2(x_i),\ i=1,...,2d,$ are distinct. Therefore,
$$
h^0(h):\ H^0(\op1(a-m-4)\boxtimes\op1(a-3))\to H^0(\overset{2d}{\underset{1}{\oplus}}~\mathbf{k}_{x_j})
$$
is also an epimorphism. Whence by the diagram \eqref{diagr 1}
we obtain an epimorphism
\begin{equation}\label{h0(g) epi}
h^0(g):\ H^0(\cali_{Y,\p3}(a-3))\twoheadrightarrow  H^0(\calo_Z(a-3)).
\end{equation}
Now consider the 
$g\circ\varepsilon:\ \cale_1(a-4)\twoheadrightarrow\calo_Z(a-3)$, where
$\varepsilon$ is the epimorphism in the triple \eqref{trC new} and set $E:=\ker(g\circ\varepsilon)\otimes\op3(4-a)$.
Thus, since $\calo_Z=\overset{d}{\oplus}\calo_{\tilde{l}_i}$, we have an exact triple:
\begin{equation}\label{triple with l's}
0\to E(a-4)\to\cale_1(a-4)\xrightarrow{g\circ\varepsilon}
\overset{d}{\underset{1}{\oplus}}\calo_{\tilde{l}_i}(a-3)\to0.
\end{equation}
From the triple \eqref{trC new} it follows that $h^0(\varepsilon):H^0(\cale_1(a-4))\to H^0(\cali_{Y,\p3}(a-3))$ is an epimorphism, hence by \eqref{h0(g) epi} $h^0(g\circ\varepsilon):H^0(\cale_1(a-4))
\to H^0(\overset{d}{\oplus}\calo_{\tilde{l}_i}(a-3))$ is also
an epimorphism. This together with (\ref{triple with l's}) and
\eqref{vanish 6} yields that
\begin{equation}\label{vanish 7}
h^1(E(a-4))=0.
\end{equation}
Note that from \eqref{triple with l's} it follows also that
\begin{equation}\label{c2(E)}
c_2(E)=c_2(\cale_1)+d=m+d\le a+1,
\end{equation}
since $d\le5$ and $m\le a-4$.

Now show that 
\begin{equation}\label{E in closure}
[E]\in\bar{I}_{m+d},
\end{equation}
where $\bar{I}_{m+d}$ is the closure of $I_{m+d}$ in the Gieseker-Maruyama moduli scheme $M(0,m+d,0)$ of semistable rank 2 coherent sheaves with Chern classes $c_1=c_3=0$ and
$c_2=m+d$. (Recall that $M(0,m+d,0)$ is a projective scheme
containing $\calb(0,m+1)$ as an open subscheme - see e. g., \cite{H-vb}, \cite{HL}.) It is enough to treat the case
$d=2$, since the argument for any $d\le5$ is completely similar. Consider the triple \eqref{triple with l's} and denote by $E'_0$ the kernel of the composition 
$$
\cale_1\xrightarrow{g\circ\varepsilon}
\calo_{\tilde{l}_1}(1)\oplus\calo_{\tilde{l}_2}(1)
\xrightarrow{\mathrm{pr_1}}\calo_{\tilde{l}_1}(1).
$$
We then obtain an exact triple
\begin{equation}\label{tr E E'0}
0\to E\to E'_0\xrightarrow{\varepsilon'}\calo_{\tilde{l}_2}(1)
\to0.
\end{equation}
Now we invoke one of the main results of the paper \cite{JMT}
according to which the sheaf $E'_0$ lies in the closure $\bar{I}_{m+1}$ of $I_{m+1}$ in the Gieseker-Maruyama moduli scheme $M(0,m+1,0)$. This implies that there exists a punctured curve $(C,0)\in\bar{I}_{m+1}$ and a flat over $C$ coherent $\calo_{\p3\times C}$-sheaf $\mathbb{E'}$ such that the sheaf $E'_t:=\mathbb{E'}|_{\p3\times\{t\}}$ is an instanton bundle from $I_{m+1}$ for $t\ne0$ and coincides with $E'_0$ for $t=0$. Now, without loss of generality, after possible shrinking the curve $C$, one can extend the epimorphism $\varepsilon'$ in \eqref{tr E E'0} to an epimorphism
$$
\mathbf{e}:\mathbb{E'}\twoheadrightarrow\calo_{\tilde{l}_2}(1)
\boxtimes\calo_C
$$ 
such that $\mathbf{e}\otimes\mathbf{k}(0)=\varepsilon'$.
Set $\mathbb{E}=\ker\mathbf{e}$ and denote $E_t=\mathbb{E}|_{\p3\times\{t\}}$, $t\in C$. As for $t\ne0$ the sheaf $E'_t$ is an instanton bundle from $I_{m+1}$, and 
it fits in an exact triple $0\to E_t\to E'_t\to\calo_{\tilde{l}_2}(1)\to0$, the above mentioned result
from \cite{JMT} yields that $[E_t]\in\bar{I}_{m+2}$ for $t\ne0$. Hence, since $[E_t]\in\bar{I}_{m+2}$ is projective,
it follows that $E_0\in\bar{I}_{m+2}$. Now by construction
$E_0\simeq E$. Thus, $[E]\in\bar{I}_{m+2}$, i. e. we obtain 
the desired result \eqref{E in closure} for $d=2$.
Formula \eqref{vanish 2} now follows from \eqref{vanish 7} for a general $\cale$ by Semicontinuity and Serre duality.

To prove the vanishing \eqref{vanish 3}, consider the triple
\eqref{trC} twisted by $\cale_2$:
\begin{equation}\label{trC times E2}
0\to\cale_2(-1)\to\cale_1\otimes\cale_2
\to\cale_2\otimes\cali_{Y,\p3}(1)\to0,
\end{equation}
and the exact triple
$0\to\cale_2\otimes\cali_{Y,\p3}(1)\to\cale_2(1)\to
\oplus_{i=1}^{m+1}(\cale_2|_{l_i})\to0.$ 
Since $\cale_2$ is an instanton bundle, it follows that
\begin{equation}\label{vanish inst}
h^2(\cale_2(1))=0,\ \ \ h^2(\cale_2(-1))=0.
\end{equation}
On the other hand, without loss of generality, by the Grauert-M\"ulich Theorem \cite[Ch. 2]{OSS} we may assume 
that $\cale_2|_{l_i}\simeq\op1$. This together with the last exact triple and the first equality \eqref{vanish inst} yields
$h^2(\cale_2\otimes\cali_{Y,\p3}(1))=0$. Therefore, in view of \eqref{trC times E2} and the second equality \eqref{vanish inst} we obtain the equality \eqref{vanish 3} for $j=1$.
Last, this equality for $j=0,3$ follows from \eqref{not equal} and the stability of $\cale_1$ and $\cale_2$.
\end{proof}

\begin{remark}\label{rem A}
Note that, under the conditions of Proposition \ref{Prop 1}, the equalities (\ref{vanish 3}) together with Riemann-Roch yield
\begin{equation}\label{h1 12}
h^1(\cale_1\otimes\cale_2)=8m+4\varepsilon-4.
\end{equation}
\end{remark}

\vspace{5mm}

\section{Construction of stable rank two bundles with even determinant}
\label{section 3}

\vspace{5mm}
We first recall the notion of symplectic instanton.
By a \textit{symplectic structure} on a vector bundle $E$ on a scheme $X$ we mean an anti-self-dual isomorphism $\theta: E \xrightarrow{\sim}E^{\vee},\ \theta^{\vee}=-\theta$, considered modulo proportionality. Clearly,
a symplectic vector bundle $E$ has even rank:
\begin{equation*}\label{rk E}
\rk E=2r,\ \ \ r\ge1,
\end{equation*}
and, if $X=\p3$, vanishing odd Chern classes:
\begin{equation*}\label{odd c_i}
c_1(E)=c_3(E)=0.
\end{equation*}
Following \cite{AJTT}, we call a symplectic vector bundle $E$ on $\p3$ a \textit{symplectic instanton} if
\begin{equation}\label{def of sympl inst}
h^0(E(-1))=h^1(E(-2))=h^2(E(-2))=h^3(E(-3))=0,
\end{equation}
and
\begin{equation*}\label{c_2=n}
c_2(E)=n,\ \ \ n\ge1.
\end{equation*}
Consider the instanton bundles $\cale_1$ and $\cale_2$ introduced in Section \ref{section 2} (see Proposition \ref{Prop 1}).
Since $\det\cale_1\simeq\det\cale_2\simeq\op3$, there are symplectic structures $\theta_i:\ \cale_i\xrightarrow{\simeq}\cale_i^{\vee},\ i=1,2,$ which yield a symplectic structure on the direct sum
$\mathbb{E}=\cale_1\oplus\cale_2$:
\begin{equation}\label{sympl str}
\theta=\theta_1\oplus\theta_2:\ \mathbb{E}=\cale_1\oplus\cale_2 \xrightarrow{\simeq}\cale_1^{\vee}\oplus\cale_2^{\vee}=
\mathbb{E}^{\vee}.
\end{equation}
Clearly, $\mathbb{E}$ is a symplectic instanton.

Now assume that $\cale_1$ and $\cale_2$ are chosen in such a way that there exist sections
\begin{equation}\label{empty intersectn}
s_i:\ \op3\to\cale_i(a), \ \ \ such\ that\ \ \ \dim(s_i)_0=1,\ \ i=1,2,\ \ \ (s_1)_0\cap(s_2)_0=\emptyset.
\end{equation}
(Such $\cale_1\in I_m,\ [\cale_2]\in I_{m+\varepsilon}$ always exist; for instance, two general 't Hooft instantons $\cale_1$ and $\cale_2$ satisfy the property (\ref{empty intersectn}) already for $a=1$, hence also for $a\ge2$.) The condition (\ref{empty intersectn}) implies that
the section
\begin{equation}\label{subbundle}
s=(s_1,s_2):\ \op3(-a)\to\mathbb{E}
\end{equation}
is a subbundle morphism, hence its transpose
\begin{equation*}\label{ta}
{}^ts:=s^{\vee}\circ\theta:\ \mathbb{E}\to\op3(a)
\end{equation*}
is a surjection. As $\theta$ in (\ref{sympl str}) is symplectic,
the composition ${}^ts\circ s:\op3(-a)\to\op3(a)$ is also symplectic. Since
$\op3(\pm a)$ are line bundles, it follows that ${}^ts\circ s=0$. Therefore the complex
\begin{equation}\label{monad}
K^{\cdot}:\ \ \ 0\to\op3(-a)\xrightarrow{s}\mathbb{E}\xrightarrow{{}^ts}
\op3(a)\to0
\end{equation}
is a monad and its cohomology sheaf
\begin{equation}\label{coho}
E=\frac{\ker({}^ts)}{\mathrm{im}(s)}
\end{equation}
is locally free. Note that, since the instanton bundles $\cale_1$ and $\cale_2$ are stable, they have zero spaces of global sections, hence also $h^0(\mathbb{E})=0$, and
\eqref{monad} and \eqref{coho} yield $h^0(E)=0$, i. e.
$E$ as a rank 2 vector bundle with $c_1=0$ is stable. 
Besides, since  $c_2(\mathbb{E})=c_2(\cale_1)+c_2(\cale_2)=2m+\varepsilon$,
it follows from \eqref{monad} that 
$c_2(E)=2m+\varepsilon +a^2$. Thus,
\begin{equation*}\label{E in B}
[E]\in\calb(0,2m+\varepsilon +a^2),
\end{equation*} 
and the deformation theory yields that, for any irreducible
component $\calm$ of $\calb(0,2m+\varepsilon +a^2)$,
\begin{equation*}\label{def theory ineq}
\dim\calm\ge1-\chi(\mathcal{E}nd~E)
=8(2m+\varepsilon +a^2)-3.
\end{equation*} 

Note that, since $\cale_1$ and $\cale_2$ are instanton bundles, for $a\ge2$ one has $h^1(\cale_i(-a))=
h^j(\cale_i(a))=0,\ i=1,2,\ j=2,3$, hence by \eqref{sympl str}
\begin{equation}\label{h1bbE(-a)=0}
h^1(\mathbb{E}(-a))=0,\ \ \ a\ge2,
\end{equation}
\begin{equation}\label{h2,3bbE(a)=0}
h^j(\mathbb{E}(a))=0,\ \ \ j=2,3,\ \ \ a\ge2.
\end{equation}
Similarly, in view of \eqref{vanish 1}, 
\begin{equation}\label{h1E(a)=0}
h^1(\mathbb{E}(a))=0,\ \ \ m+\varepsilon\le a+1,\ \ \ a\ge2. 
\end{equation}
This together with \eqref{h2,3bbE(a)=0} and Riemann-Roch yields:
\begin{equation}\label{h0bbE(a)=}
h^0(\mathbb{E}(a))=\chi(\mathbb{E}(a))=
4\binom{a+3}{3}-(2m+\varepsilon)(a+2),\ \ \ 
m+\varepsilon\le a+1,\ \ \ a\ge2.
\end{equation}

Next, 
\begin{equation}\label{decomp EndE}
\mathcal{E}nd~\mathbb{E}\simeq\mathbb{E}\otimes
\mathbb{E}\simeq S^2\mathbb{E}\oplus\wedge^2\mathbb{E}, 
\end{equation}
and it follows from (\ref{sympl str}) that
\begin{equation}\label{decomp S^2E}
S^2\mathbb{E}\simeq S^2\mathcal{E}_1\oplus(\mathcal{E}_1\otimes\mathcal{E}_2)\oplus S^2\mathcal{E}_2,\ \ \ 
\wedge^2\mathbb{E}\simeq \wedge^2\mathcal{E}_1\oplus(\mathcal{E}_1\otimes\mathcal{E}_2)\oplus \wedge^2\mathcal{E}_2.
\end{equation}
Now, since $\mathcal{E}nd~\mathcal{E}_i\simeq\cale_i\otimes
\cale_i\simeq S^2\cale_i\oplus \wedge^2\cale_i,\ \wedge^2\cale_i\simeq\op3,\ i=1,2,$ it follows from \cite{JV} that
$h^1(\mathcal{E}nd~\mathcal{E}_1)\simeq h^1(S^2\cale_1)=8m-3,\ h^1(\mathcal{E}nd~\mathcal{E}_2)\simeq h^1(S^2\cale_2)=8m+8\varepsilon-3,$ and 
$h^j(\mathcal{E}nd~\mathcal{E}_i)=h^j(S^2\cale_i)=0,\ i=1,2,\ j\ge2$. This together with \eqref{decomp EndE}-(\ref{decomp S^2E}), (\ref{vanish 1}) and (\ref{h1 12}) implies that
\begin{equation}\label{h1 S2bbE}
h^1(\mathcal{E}nd~\mathbb{E})=32m+16\varepsilon-14,\ \ \ 
h^1(S^2\mathbb{E})=24m+12\varepsilon-10,
\end{equation}
\begin{equation}\label{hi S2bbE}
h^i(\mathcal{E}nd~\mathbb{E})=h^i(S^2\mathbb{E})=0,\ \ \ i\ge2.
\end{equation}

Now assume that 
\begin{equation}\label{basic cond}
either\ \ \ 5\le a\le 12,\ 1+\varepsilon\le m+\varepsilon\le a-4,\ \ \ or\ \ \ 
a\ge12,\ 1+\varepsilon\le m+\varepsilon\le a+1.
\end{equation} 
It follows from (\ref{vanish 2}), (\ref{vanish 2a}) and
\eqref{sympl str} that
\begin{equation}\label{h2E(-a)=0}
h^2(\mathbb{E}(-a))=0.
\end{equation}
Consider the total complex $T^{\cdot}$ of the double komplex $K^{\cdot}\otimes K^{\cdot}$, where $K^{\cdot}$ is the monad (\ref{monad}):
\begin{equation*}\label{total T}
\begin{split}
& T^{\cdot}:\ \ \ 0\to\op3(-2a)\xrightarrow{d_{-2}}
2\mathbb{E}(-a)\xrightarrow{d_{-1}}\mathbb{E}\otimes
\mathbb{E}\oplus2\op3\xrightarrow{d_0}
2\mathbb{E}(a)\xrightarrow{d_1}\op3(2a)\to0,\\
& \ \ \ \ \ \ \ \ \ \ \ \ \ \ \ \ \ \ \ \ \ \ \ \ \ \ \ \ \ \ \ \ \ \ \ \ \ \ \ \ \ \  E\otimes E=\frac{\ker(d_0)}
{\mathrm{im}(d_{-1})}.
\end{split}
\end{equation*}
Following Le Potier \cite{LP}, consider the
symmetric part $ST^{\cdot}$ of the complex $T^{\cdot}$:
\begin{equation*}\label{sym of monad}
ST^{\cdot}:\ \ \ 0\to\mathbb{E}(-a)\xrightarrow{\alpha}S^2\mathbb{E}\oplus
\op3\xrightarrow{{}^t\alpha}\mathbb{E}(a)\to0,\ \ \ \ \ \ \ 
S^2E=\frac{\ker({}^t\alpha)}{\mathrm{im}(\alpha)},
\end{equation*}
where $\alpha$ is the induced subbundle map.
The inclusion of complexes $ST^{\cdot}\hookrightarrow T^{\cdot}$
induces commutative diagrams
\begin{equation}\label{diagr D1}
\xymatrix{
0\ar[r] & \mathbb{E}(-a))\ar[r]\ar@{_{(}->}[d] & 
\ker({}^t\alpha)\ar[r]\ar@{_{(}->}[d] & S^2E
\ar[r]\ar@{_{(}->}[d] & 0\\
0\ar[r] & \mathrm{im}(d_{-1})\ar[r]& \ker(d_0) \ar[r] & E\otimes E\ar[r] & 0,}
\end{equation}
\begin{equation}\label{diagr D2}
\xymatrix{
0\ar[r] & \ker({}^t\alpha)\ar[r]\ar@{_{(}->}[d] & 
S^2\mathbb{E}\oplus\op3\ar[r]\ar@{_{(}->}[d] & \mathbb{E}(a)
\ar[r]\ar@{_{(}->}[d] & 0\\
0\ar[r] & \ker(d_0)\ar[r]& \mathbb{E}\otimes\mathbb{E}\oplus
2\op3 \ar[r] & \mathrm{im}(d_0)\ar[r] & 0,}
\end{equation}
and an exact triple
\begin{equation}\label{exact ker}
0\to\op3(-2a)\xrightarrow{d_{-2}}2\mathbb{E}(-a)
\to\mathrm{im}(d_{-1})\to0
\end{equation}
Passing to cohomology in \eqref{diagr D1}-\eqref{exact ker} and
using \eqref{h1bbE(-a)=0}, \eqref{h2E(-a)=0}, \eqref{h1E(a)=0}
and the equality $h^0(S^2\mathbb{E})=0$, we obtain an equality $h^0(\mathrm{coker}\alpha)=1$ and an exact sequence
\begin{equation}\label{seq S2E}
0\to H^0(\mathbb{E}(a))/\mathbb{C}\to
H^1(S^2E)\xrightarrow{\mu}H^1(S^2\mathbb{E})\to0,
\end{equation}
which fits in a commutative diagram
\begin{equation}\label{diagr D1}
\xymatrix{
0\ar[r] & H^0(\mathbb{E}(a))/\mathbb{C})\ar[r] & 
H^1(S^2E)\ar[r]^-\mu\ar@{=}[d] & H^1(S^2\mathbb{E})
\ar[r]\ar@{_{(}->}[d] & 0\\
& & H^1(E\otimes E)) \ar[r] & \mathbb{E}\otimes 
\mathbb{E}. &}
\end{equation}

From \eqref{h0bbE(a)=}, \eqref{h1 S2bbE} and \eqref{h1 S2E} it
follows that
\begin{equation}\label{h1 S2E}
h^1(S^2E)=h^0(\mathbb{E}(a))+24m+12\varepsilon-11=
4\binom{a+3}{3}+(2m+\varepsilon)(10-a)-11.
\end{equation}

Note that, since $E$ is a stable rank-2 bundle, $H^1(\cale nd~E)=H^1(S^2E)$ is isomorphic to the Zariski tangent space $T_{[E]}\calb(0,2m+\varepsilon +a^2)$:
\begin{equation}\label{Kod-Sp1}
\theta_{E}:\ T_{[E]}\calb(0,2m+\varepsilon +a^2)\simto
H^1(\cale nd~E)=H^1(S^2E). 
\end{equation}
(Here $\theta_{E}$ is the Kodaira-Spencer isomorphism.)
Thus, we can rewrite \eqref{h1 S2E} as
\begin{equation}\label{dim Zar tang sp}
\dim T_{[E]}\calb(0,2m+\varepsilon +a^2)=
4\binom{a+3}{3}+(2m+\varepsilon)(10-a)-11.
\end{equation}

We will now prove the following main result of this section.
\begin{theorem}\label{Thm A}
Under the condition \eqref{basic cond}, there exists an irreducible family $\calm_n(E)\subset\calb(0,n)$, where $n=2m+\varepsilon +a^2$, of dimension given by the right hand side of \eqref{dim Zar tang sp} and containing the above constructed
point $[E]$. Hence the closure $\calm_n$ of $\calm_n(E)$ in
$\calb(0,n)$ is an irreducible component of $\calb(0,n)$.
The set $\Sigma_0$ of these components $\calm_n$ is an infinite series distinct from the series of instanton components $\{I_n\}_{n\ge1}$ and from the series of components described in \cite{Ein} and \cite{AJTT}. 
Furthermore, at least for each $n\ge146$ there exists an irreducible component $\calm_n$ of $\calb(0,n)$ belonging to the series $\Sigma_0$.
\end{theorem}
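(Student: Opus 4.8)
The plan is to organize the argument into two conceptually distinct parts: (i) showing that the constructed point $[E]$ lies on an irreducible family $\calm_n(E)$ whose dimension equals $h^1(\cale nd\,E)$, so that $[E]$ is a \emph{smooth} point of $\calb(0,n)$ and $\calm_n$ is a genuine irreducible component; and (ii) the numerical bookkeeping that identifies which $n$ arise and shows that for every $n\ge146$ some triple $(a,m,\varepsilon)$ satisfying \eqref{basic cond} realizes $n=2m+\varepsilon+a^2$.

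For part (i), I would run a standard deformation/parameter-count argument. First, recall that by Proposition \ref{Prop 1} the data needed to build the monad \eqref{monad} — a general pair $([\cale_1],[\cale_2])\in I_m\times I_{m+\varepsilon}$, a symplectic structure on $\mathbb{E}=\cale_1\oplus\cale_2$, and a subbundle section $s=(s_1,s_2)\colon\op3(-a)\to\mathbb{E}$ as in \eqref{empty intersectn}–\eqref{subbundle} — varies in an irreducible parameter space, since $I_m$ and $I_{m+\varepsilon}$ are irreducible, the space of symplectic structures is irreducible (a torsor-type argument), and the set of sections of $\mathbb{E}(a)$ satisfying the codimension-$2$ and disjointness conditions in \eqref{empty intersectn} is an open dense subset of the affine space $H^0(\mathbb{E}(a))$, whose dimension is computed in \eqref{h0bbE(a)=}. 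Taking the cohomology sheaf $E$ of \eqref{monad} gives a morphism from this irreducible parameter space to $\calb(0,n)$, with irreducible image $\calm_n(E)\ni[E]$. Now I would argue that $\dim\calm_n(E)$ equals the right-hand side of \eqref{dim Zar tang sp}: the fibers of the monad construction are cut out by the choices of basis-type identifications ($\Aut$ of the two line bundles, i.e.\ scalars, together with the $\mathbb{C}^\ast$ ambiguity in the symplectic form and the reparametrizations already quotiented), and the exact sequence \eqref{seq S2E} together with the diagram following it is precisely engineered so that the tangent map of this construction surjects onto $H^1(S^2 E)=T_{[E]}\calb(0,n)$; combining with the semicontinuity computation \eqref{h2,3bbE(a)=0}, \eqref{h2E(-a)=0}, \eqref{hi S2bbE} that $h^{\ge 2}(S^2E)=0$, one concludes $\dim_{[E]}\calb(0,n)=h^1(S^2E)=\dim\calm_n(E)$, forcing $[E]$ to be a smooth point and $\calm_n:=\overline{\calm_n(E)}$ to be an irreducible component.

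To see that $\Sigma_0$ is distinct from the instanton series $\{I_n\}$ and from the series of \cite{Ein} and \cite{AJTT}, I would compare dimensions: by \eqref{dim Zar tang sp} we have $\dim\calm_n=4\binom{a+3}{3}+(2m+\varepsilon)(10-a)-11$, and under \eqref{basic cond} with $a\ge 12$ the coefficient $10-a$ is negative while the leading term $4\binom{a+3}{3}$ is of order $a^3$, so $\dim\calm_n$ strictly exceeds the expected dimension $8n-3=8(2m+\varepsilon+a^2)-3$ for all admissible triples (a short inequality check in $a$); this rules out $I_n$, which has expected dimension, and the quantitative comparison of the dimension formula with the (known, different) dimension formulas for the Ein and Almeida--Jardim--Tikhomirov--Tikhomirov components rules those out as well — alternatively one can argue via the structure of the monad (a rank-$4$ \emph{symplectic instanton} middle term of varying $c_2$, versus the reflexive/rank-$1$-monad descriptions in those references).

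For part (ii), the task reduces to a purely arithmetic statement: the set $S=\{\,2m+\varepsilon+a^2 : (a,m,\varepsilon)\text{ satisfies }\eqref{basic cond}\,\}$ contains every integer $n\ge146$. Here \eqref{basic cond} in the regime $a\ge 12$ allows $\varepsilon\in\{0,1\}$ and $1+\varepsilon\le m+\varepsilon\le a+1$, so for fixed $a\ge 12$ the value $2m+\varepsilon$ ranges over every integer in the interval $[2,\,2a+2]$ (using $\varepsilon$ to hit both parities); hence $S\supseteq\bigcup_{a\ge 12}\,[\,a^2+2,\ a^2+2a+2\,]$. Two consecutive such blocks $[a^2+2,\,a^2+2a+2]$ and $[(a+1)^2+2,\,(a+1)^2+2(a+1)+2]=[a^2+2a+3,\,a^2+4a+5]$ abut with no gap (the first ends at $a^2+2a+2$, the next starts at $a^2+2a+3$), so $\bigcup_{a\ge 12}[a^2+2,\,a^2+2a+2]=[\,144+2,\ \infty)=[146,\infty)$. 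This gives the bound $n\ge 146$ exactly. The main obstacle in the whole proof is part (i) — specifically, verifying cleanly that the monad construction is \emph{dominant onto} an irreducible component of the expected form, i.e.\ that the differential of the classifying map is surjective onto $H^1(S^2E)$; this is exactly what the chain of diagrams \eqref{diagr D1}–\eqref{diagr D2}, the exact sequence \eqref{seq S2E}, and the vanishings \eqref{h1bbE(-a)=0}–\eqref{h2E(-a)=0} are set up to deliver, but assembling them into a rigorous tangent-space surjectivity statement (rather than just an equality of numbers) requires care, and is where I would spend most of the effort.
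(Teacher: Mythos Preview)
Your arithmetic in part (ii) is correct and matches the paper's argument exactly. The gap is in part (i): your parameter space is too small.

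You parametrize the monad data by pairs $([\cale_1],[\cale_2])\in I_m\times I_{m+\varepsilon}$ together with a symplectic structure and a section $s\in H^0(\mathbb{E}(a))$. But $\dim(I_m\times I_{m+\varepsilon})=(8m-3)+(8m+8\varepsilon-3)=16m+8\varepsilon-6$, whereas the right-hand side of \eqref{seq S2E} is $h^1(S^2\mathbb{E})=24m+12\varepsilon-10$; the discrepancy is exactly $h^1(\cale_1\otimes\cale_2)=8m+4\varepsilon-4>0$ from \eqref{h1 12}. These ``missing'' first-order deformations of $\mathbb{E}$ as a symplectic bundle are precisely the ones that mix the two summands and move $\mathbb{E}$ \emph{away from being a direct sum}. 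Since your parameter space only sees direct sums, the differential of your classifying map lands in the subspace of $H^1(S^2E)$ lying over $H^1(S^2\cale_1)\oplus H^1(S^2\cale_2)\subsetneq H^1(S^2\mathbb{E})$ under $\mu$ in \eqref{seq S2E}, and cannot be surjective. Consequently you cannot conclude that $[E]$ is a smooth point, nor that your image has dimension equal to \eqref{dim Zar tang sp}.

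The paper resolves this by invoking (via Bingener, using $h^2(\cale nd\,\mathbb{E})=0$ from \eqref{hi S2bbE}) a versal deformation $B$ of $\mathbb{E}$ and cutting out the symplectic locus $U\subset B$, which is smooth of dimension $h^1(S^2\mathbb{E})$; a general point of $U$ is a genuine (typically non-split) rank-4 symplectic instanton. One then builds a projective bundle of sections over $U$, obtains a smooth irreducible variety $Y$ with $\dim T_{x_0}Y=h^0(\mathbb{E}(a))-1+h^1(S^2\mathbb{E})$, and checks that the differential of the modular morphism $\Phi:Y\to\calb(0,n)$ fits into a diagram with \eqref{seq S2E} so that $\theta_E\circ d\Phi$ is an \emph{isomorphism} onto $H^1(S^2E)$. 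That is the step you correctly flagged as the crux, but your proposed parameter space is not large enough to carry it out.
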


\begin{proof}
According to J. Bingener \cite[Appendix]{BH}, the equality $h^2(\cale nd\mathbb{E})=0$ (see \eqref{hi S2bbE})
implies that there exists (over $\mathbf{k}=\mathbb{C}$)
a versal deformation of the bundle $\mathbb{E}$, i. e. a smooth variety $B$ of dimension $\dim B=h^1(\cale nd\mathbb{E})$, with a marked point $0\in B$, and a locally free sheaf  $\boldsymbol{\cale}$ on $\p3\times B$ such that $\boldsymbol{\cale}|_{\p3\times\{0\}}\simeq\mathbb{E}$ and the
Koaira-Spencer map $\theta:\ T_{[\mathbb{E}]}B\to 
H^1(\cale nd\mathbb{E})$ is an isomorphism. For $b\in B$ denote $E_b:=\boldsymbol{\cale}|_{\p3\times\{b\}}$ and consider in $B$ a closed subset
\begin{equation*}
U=\{b\in B\ |\ E_b\ is\ a\ symplectic\ instanton\}.
\end{equation*}
By definition, $U=\tilde{U}\cap B^*$, where 
$\tilde{U}=\{b\in B\ |\ E_b\ is\ a\ symplectic\ bundle\}$ is a closed subset of $B$ and 
\begin{equation*}
\begin{split}
&B^*=\{b\in B\ |\ E_b \ satisfies\ the\ vanishing\ conditions\ \eqref{def of sympl inst}\ and\ the\ condition\ \\ 
&  h^0(E_b)=h^i(E_b(-a))=h^j(E_b(a))=h^k(S^2E_b)=0,\ i=1,2,\ j\ge1,\ k=0,2,3\}
\end{split} 
\end{equation*}
is an open subset of $B$ by the Semicontinuity. (Here $a$ is taken from \eqref{basic cond}). Since $\mathbb{E}$ is symplectic, so that
$\cale nd\mathbb{E}\simeq S^2\mathbb{E}\oplus\wedge^2\mathbb{E}$, 
it follows from \cite{R} that the Kodaira-Spencer map $\theta$ yields an isomorphism $\theta:T_{[\mathbb{E}]}U=T_{[\mathbb{E}]}\tilde{U}\simto H^1(S^2\mathbb{E})$. 
Thus, $U$ is a smooth variety of dimension  
$$
\dim U=h^1(S^2\mathbb{E})=24m+12\varepsilon−10.
$$
(We use Riemann-Roch and the vanishing of $h^i(S^2\mathbb{E}),\ i\ne1$.) 

Let $p:\p3\times B\to B$
be the projection. By the Base-Change and the vanishing conditions defining $B^*$, respectively, $U$ the sheaf $\cala:=p_*(\boldsymbol{\cale}\otimes\op3(a)\boxtimes\calo_U)$ is a locally free sheaf of rank $\chi(\mathbb{E}(a))=h^0(\mathbb{E}(a))$ given by \eqref{h0bbE(a)=}. Hence $\pi:\ \tilde{X}=\mathbf{Proj}
(S^{\cdot}_{\op3}\cala^{\vee})\to U$ is a projective bundle
with the Grothendieck sheaf $\calo_{\tilde{X}/U}(1)$ and a
morphism 
$\mathbf{s}:\ \op3\boxtimes\calo_{\tilde{X}/U}(-1)\to
\tilde{\pi}^*(\boldsymbol{\cale}\otimes\op3(a)
\boxtimes\calo_U)$
defined as the composition of canonical evaluation morphisms
$\op3\boxtimes\calo_{\tilde{X}/U}(-1)\to\tilde{p}^*\pi^*\cala
\to\tilde{\pi}^*(\boldsymbol{\cale}\otimes\op3(a)\boxtimes
\calo_U)$, where $\tilde{X}\xleftarrow{\tilde{p}}\p3\times
\tilde{X}\xrightarrow{\tilde{\pi}}\p3\times U$ are the induced
projections.

Let $X=\{x\in\tilde{X}\ |\ \mathbf{s}^{\vee}|_{\p3\times\{x\}}\ $is surjective $\}$. This is an open dense subset of the smooth irreducible variety $\tilde{X}$ since it contains the point $x_0=(s:\ \op3\to\mathbb{E}(a))$ given in \eqref{subbundle}.
Hence $X$ is smooth and irreducible. In addition, since $\boldsymbol{\cale}$ is a versal family of bundles, it follows
that $X$ is an open subset of the Quot-scheme 
$\mathrm{Quot}_{\p3\times B/B}(\boldsymbol{\cale},P(n))$, where 
$P(n):=\chi(\op3(a+n))$. Therefore, by \cite[Prop. 2.2.7]{HL}
in view of \eqref{h1E(a)=0} there is an exact triple
\begin{equation}\label{tangent seq}
0\to H^0(\mathbb{E}(a))/\mathbb{C}\to T_{x_0}X\xrightarrow{d\pi}T_{[\mathbb{E}]}B\to0,
\end{equation}
which is obtained as the cohomology sequence
\begin{equation}\label{coho seq}
0\to H^0(\mathbb{E}(a))/\mathbb{C}\to H^1(\calh om(F,\mathbb{E}))\to H^1(\cale nd~\mathbb{E})\to0
\end{equation}
of the exact triple 
$0\to\calh om(F,\mathbb{E})\to\calh om(\mathbb{E},
\mathbb{E})\to\mathbb{E}(a)\to0$ obtained by applying the functor $\calh om(-,\mathbb{E})$ to the exact triple
$0\to\op3(-a)\xrightarrow{s}\mathbb{E}\to F\to0$, 
where $F:=\mathrm{coker}(s)$. 

Next, since $\boldsymbol{\cale}$ is a versal family of bundles, it follows that $\mathbf{E}=\boldsymbol{\cale}|_{\p3\times U}$ is a versal family of symplectic instantons. Hence, denoting 
$Y=U\times_BX$, we extend the exact triple \eqref{tangent seq}
to commutative diagram
\begin{equation}\label{diagr 4}
\xymatrix{
0\ar[r] & H^0(\mathbb{E}(a))/\mathbb{C}\ar[r] & 
T_{x_0}X\ar[r] & T_{[\mathbb{E}]}B\ar[r] & 0\\
0\ar[r] & H^0(\mathbb{E}(a))/\mathbb{C}\ar[r]\ar[r]\ar@{=}[u] & T_{x_0}Y \ar[r]^-{d\pi}
\ar@{^{(}->}^-{i_Y}[u] & T_{[\mathbb{E}]}U
\ar@{^{(}->}^-{i_U}[u]\ar[r] & 0,}
\end{equation}
where $i_Y$ and $i_U$ are natural inclusions. (Note that, under the Kodaira-Spencer isomorphisms $\theta:T_{[\mathbb{E}]}U)
\xrightarrow{\simeq}H^1(S^2\mathbb{E})$ and $T_{[\mathbb{E}]}B
\xrightarrow{\simeq}H^1(\mathbb{E}\otimes\mathbb{E})\simeq
H^1(\cale nd~\mathbb{E})$ the rightmost inclusions in diagrams
\eqref{diagr D1} and \eqref{diagr 4} coincide.)
Consider the modular morphism
$$
\Phi:\ Y\to\calb:=\calb(0,2m+\varepsilon +a^2),\ \ 
(b,s)\mapsto\bigg[\frac{\mathrm{Ker}
({}^ts)}{\mathrm{Im}(s)}\bigg],
$$
where, as before, $s:\op3(-a)\to E_b$ is a subbundle morphism.
Its differential $d\Phi$ composed with the Kodaira-Spencer map
$\theta_E$ from \eqref{Kod-Sp1} is a linear map
\begin{equation*}
\phi=\theta_E\circ d\Phi:\ T_{x_0}Y\to H^1(S^2E)=H^1(E\otimes E).
\end{equation*}
Now from the functorial properties of the Kodaira-Spencer maps
$\phi$ and $\theta$ it follows that the triple \eqref{seq S2E} and the lower triple in the diagram \eqref{diagr 4} fit in a
commutative diagram
\begin{equation*}
\xymatrix{
0\ar[r] & H^0(\mathbb{E}(a))/\mathbb{C}\ar[r]\ar[r] & H^1(S^2E) \ar[r]^-{\mu} & H^1(S^2\mathbb{E})\ar[r] & 0\\
0\ar[r] & H^0(\mathbb{E}(a))/\mathbb{C}\ar[r]\ar[r]\ar@{=}[u] & T_{x_0}Y \ar[r]^-{d\pi}\ar^-{\phi}[u] & T_{[\mathbb{E}]}U
\ar^-{\theta}_-{\simeq}[u]\ar[r] & 0.}
\end{equation*}
This implies that $\phi$ is an isomorphism, so that, since $Y$
is smooth at $x_0$ and irreducible, $\calm_n(E)=\Phi(Y)$ is an open subset of an irreducible component $\calm_n$ of $\calb(0,n)$, of dimension given by \eqref{dim Zar tang sp}. 

It is easy to check that the dimension $\dim\calm_n$ given by
\eqref{dim Zar tang sp}, with $m,\varepsilon$ and $a$ subjected to the condition \eqref{basic cond}, satisfies 
the strict inequality $\dim\calm_n>8n-3=\dim I_n$. 
This shows that the series $\Sigma_0$ is distinct from $\{I_n\}_{n\ge1}$. To distinguish $\Sigma_0$ from
from the series of components described in \cite{Ein}, it is enough to see that the spectra of general bundles of these two series are different. (We leave to the reader a direct verification of this fact.)  

Note that, for each $a\ge12$ we have $1\le m\le a+1$ and $0\le\varepsilon\le1$, so that $n=2m+\varepsilon +a^2$
ranges through the whole interval of positive integers $(a^2+2,(a+1)^2+1)\subset\mathbb{Z}_{+}$. Hence, $n$ takes
at least all positive values $\ge12^2+2=146$. This shows that
for each $n\ge146$ there exists an irreducible component $\calm_n\in\Sigma_0$. 

Last, remark that, for the series of components $\calm_n$ described in \cite{AJTT}, $n$ takes values $n=1+k^2,\ k\in\{2\}\cup(4,\infty)$. Hence this series is distinct from $\Sigma_0$.
Theorem is proved.
\end{proof}

\vspace{5mm}

\section{Construction of stable rank two bundles with odd determinant}
\label{section 4}

\vspace{5mm}
In this section we will construct an infinite series of stable vector bundles from $\calb(-1,2m)$,  $m\in\mathbb{Z}_+$. It is known from \cite[Example 4.3.2]{H-vb} that, for each $m\ge1$ there exists an irreducible component $\calb_0(-1,2m)$ of $\calb(-1,2m)$,
of the expected dimension
\begin{equation}\label{dim B0}
\dim\calb_0(-1,2m)=16m-5,
\end{equation}
which contains bundles $\cale_1$ obtained via the Serre constructions as the extensions of the form
\begin{equation}\label{Serre-1}
0\to\op3(-2)\to\cale_1\to\cali_{Y,\p3}(1)\to0,
\end{equation}
where $Y$ is a union of $m+1$ disjoint conics in $\p3$.

Below we will need the following analogue of the Proposition \ref{Prop 1}.
\begin{proposition}\label{Prop 2}
Let $a,m\in\mathbb{Z}_+$, $a\ge2$, and
let $\varepsilon\in\{0,1\}$. A general pair
\begin{equation}\label{2 inst a}
([\cale_1],[\cale_2])\in \calb_0(-1,2m)\times \calb_0(-1,2(m+\varepsilon)),
\end{equation}
of vector bundles satisfies the following conditions:
$$
[\cale_1]\ne[\cale_2];
$$
for $i=1,\ a\ge2m+4,$ respectively, for $i=2,\ a\ge2(m+\varepsilon)+4$, 
\begin{equation}\label{vanish 1a}
h^1(\cale_i(a))=0,
\end{equation}
\begin{equation}\label{vanish 2a a}
h^2(\cale_i(-a))=0;
\end{equation}
\begin{equation}\label{vanish 2b}
h^1(\cale_i(-a))=0;
\end{equation}
\begin{equation}\label{vanish 3a}
h^j(\cale_1(1)\otimes\cale_2)=0,\ \ \ \ j\ne1.
\end{equation}
\end{proposition}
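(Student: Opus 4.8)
The plan is to mimic the proof of Proposition \ref{Prop 1}, replacing the 't Hooft construction built on unions of lines by the Serre construction \eqref{Serre-1} built on unions of disjoint conics, which is what produces the Hartshorne bundles in $\calb_0(-1,2m)$. As before it is enough to treat $i=1$; the case $i=2$ is identical with $m$ replaced by $m+\varepsilon$. The non-equality $[\cale_1]\ne[\cale_2]$ is obvious for a general pair, and the vanishing \eqref{vanish 3a} for $j=0,3$ follows from stability of $\cale_1,\cale_2$ together with the fact that $\cale_1(1)\otimes\cale_2$ has $c_1<0$ in the relevant range (more precisely, one uses $\cale_1^\vee\simeq\cale_1(1)$ so that $\cale_1(1)\otimes\cale_2$ is self-dual up to the appropriate twist, and $h^0=h^3=0$ reduces to stability as in the last line of the proof of Proposition \ref{Prop 1}).

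For \eqref{vanish 1a}, \eqref{vanish 2a a}, \eqref{vanish 2b}: realize a general $\cale_1$ as an extension \eqref{Serre-1} with $Y=\sqcup_{i=1}^{m+1} C_i$ a disjoint union of smooth conics, and arrange the $C_i$ to lie (say) on distinct smooth quadrics or — better, to keep the computation of $\cali_{Y,S}$ clean — choose a single smooth quadric $S\simeq\p1\times\p1$ containing $Y$ with all conics in the divisor class $\calo_S(1,1)$; then $\cali_{Y,S}\simeq\calo_S(-(m+1),-(m+1))$, and the triple $0\to\calo_{\p3}(-2)\to\cali_{Y,\p3}\to\cali_{Y,S}\to0$ lets one compute $h^1(\cali_{Y,\p3}(t))$ for the twists $t=a+2$ and $t=-a+2$ by the Künneth formula exactly as in \eqref{triple(a)}--\eqref{vanish 4}. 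Feeding these vanishings into \eqref{Serre-1} twisted by $\calo_{\p3}(a)$ gives $h^1(\cale_1(a))=0$ in the stated range $a\ge 2m+4$ (the conics contribute degree $2(m+1)$, which is the source of the "$2m$" rather than "$m$" in the bound), and twisting by $\calo_{\p3}(-a)$ gives $h^1(\cale_1(-a))=0$, which is \eqref{vanish 2b}; then Serre duality $h^2(\cale_1(-a))=h^1(\cale_1^\vee(a-4))=h^1(\cale_1(a-3))$ (using $\cale_1^\vee\simeq\cale_1(1)$ and $\omega_{\p3}=\calo_{\p3}(-4)$) converts a vanishing of type \eqref{vanish 1a} with $a$ shifted by $3$ into \eqref{vanish 2a a}; one checks the shifted range is still covered.

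For \eqref{vanish 3a} with $j=1,2$ I would again follow the template of \eqref{trC times E2}: twist \eqref{Serre-1} by $\cale_2(1)$ to get $0\to\cale_2(-1)\to\cale_1(1)\otimes\cale_2\to\cale_2\otimes\cali_{Y,\p3}(2)\to0$, and pair it with $0\to\cale_2\otimes\cali_{Y,\p3}(2)\to\cale_2(2)\to\oplus_{i=1}^{m+1}(\cale_2|_{C_i})(2)\to0$. Here $\cale_2|_{C_i}$ is a rank $2$ bundle on $C_i\simeq\p1$; by the Grauert--Mülich theorem applied to the general conic $C_i$ (which is a general "rational curve of degree $2$", so the restriction is balanced), $\cale_2|_{C_i}\simeq\calo_{\p1}(\text{something close to }-1)$, and one gets $h^{\ge1}((\cale_2|_{C_i})(2))=0$; combined with $h^2(\cale_2(2))=0$ and $h^2(\cale_2(-1))=0$ — both of which follow from the defining cohomology of the Hartshorne bundle $\cale_2$, i.e. from \eqref{Serre-1} for $\cale_2$ and standard instanton-type vanishing — this yields $h^2(\cale_1(1)\otimes\cale_2)=0$ and then $h^1=0$ via the first sequence. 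The main obstacle I anticipate is exactly this last point: unlike the $c_1=0$ instanton case, the restriction $\cale_2|_{C_i}$ to a conic and the precise list of vanishing cohomology satisfied by the Hartshorne bundles $\cale_i\in\calb_0(-1,2m)$ are less standard, so I would need to either extract them carefully from \cite{H-vb} Example 4.3.2 and the Serre construction, or impose genericity of the pair and of the conics to force the balanced splitting on each $C_i$ and the required semicontinuity; handling the exact numerical ranges (where "$a\ge 2m+4$" is tight and where the Serre-dual shift stays inside the valid interval) will also require a short but careful bookkeeping check.
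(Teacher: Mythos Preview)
Your overall strategy---mimic Proposition \ref{Prop 1} via the Serre construction \eqref{Serre-1} and semicontinuity---is exactly the paper's strategy, but the specific geometric configuration you propose is impossible. You want to place $m+1$ \emph{disjoint} smooth conics on a single smooth quadric $S\simeq\p1\times\p1$, each in the class $\calo_S(1,1)$. But any two distinct $(1,1)$-divisors on $S$ meet in $(1,1)\cdot(1,1)=2$ points, so for $m\ge1$ the conics cannot be disjoint, and the Serre extension \eqref{Serre-1} requires disjointness. Consequently the computation of $\cali_{Y,S}\simeq\calo_S(-(m+1),-(m+1))$ and the Künneth argument built on it never get off the ground.

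The paper's workaround is to degenerate each conic to a reducible one, $C_i=l_i\cup l_i'$, and to distribute the two halves over \emph{two} smooth quadrics: the $l_i$ lie in one ruling of $S$, the $l_i'$ in one ruling of a second quadric $S'$. Then $\cali_{Y_0,S}$ and $\cali_{Y',S'}$ are each of the simple shape $\op1(-m-1)\boxtimes\op1$, and a diagram analogous to \eqref{diagr 1} (the paper's \eqref{diagr 6}) reduces the vanishing $h^1(\cali_{Y,\p3}(a-2))=0$ to surjectivity of three explicit restriction maps, each handled by Künneth in the stated range $a\ge 2m+4$. From there Serre duality $h^2(\cale_1(-a))=h^1(\cale_1(a-3))$ (which you correctly identified) gives \eqref{vanish 2a a}, and \eqref{vanish 1a} follows since $a\ge a-3$. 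Your treatment of \eqref{vanish 2b} via $h^0(\calo_{C_i}(1-a))=0$ is fine and matches the paper.

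A smaller slip: in your last paragraph you write ``for \eqref{vanish 3a} with $j=1,2$'' and conclude ``$h^1=0$'', but the claim is $h^j=0$ for $j\ne 1$; only $j=2$ needs the sequence argument (the paper simply says this goes ``similarly to \eqref{vanish 3}''), and $h^1$ is in fact nonzero---it is computed in Remark \ref{rem B}.
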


\begin{proof}
Let  $Y=\sqcup_1^{m+1}C_i$ be a disjoint union of conics $C_i=l_i\cup l'_i$ decomposable into pairs of distinct lines $l_i,l'_i$, such that\\
(i) there exist two smooth quadrics $S\simeq\p1\times\p1$
and  $S'\simeq\p1\times\p1$ with the property that
$l_1,...,l_{m+1}$, respectively, $l'_1,...,l'_{m+1}$ are the lines of one ruling on $S$, respectively, on
$S'$; for instance, denoting $Y_0=l_1\sqcup...\sqcup l_{m+1}, \ Y'=l'_1\sqcup...\sqcup l'_{m+1},$
we may assume that
$$
\calo_S(Y_0)\simeq\op1(m+1)\boxtimes\op1,\ \ \ 
\calo_{S'}(Y')\simeq\op1(m+1)\boxtimes\op1;
$$
(ii) the set of $m+1$ distinct points $Z=(Y'\cap S)\setminus(Y_0\cap Y')$ satisfies the condition that
$pr_1(Z)$ is a union of $m+1$ distinct points, where
$pr_1:S'\to\p1$ is the projection. 
We then have a diagram similar to \eqref{diagr 1}:
\begin{equation}\label{diagr 6}
\xymatrix{
& 0 & 0 & 0 &\\
0\ar[r] &\calo_{Y'}(a-4)\ar[r]\ar[u] & 
\calo_{Y'}(a-3)\ar[r]\ar[u] & \calo_{Z}(a-3) \ar[u]\ar[r] & 0\\
0\ar[r]& \op3(a-4)\ar[r]\ar^-f[u] & \cali_{Y_0,\p3}(a-2)\ar[r]
\ar_-{g}[u] &\op1(a-m-3)\boxtimes\op1(a-2)
\ar^-h[u]\ar[r] & 0.}
\end{equation}
Under the assumptions $a\ge2m+4$ and $m\ge2$ the cohomology of the lower triple of this diagram yields
\begin{equation}\label{h1(IY0)}
h^1(\cali_{Y_0,\p3}(a-2))=0.
\end{equation} 
Next, similar to \eqref{triple(a)} we have an exact triple
$0\to\op3(a-6)\to\cali_{Y,\p3}(a-4)\to\op1(a-5-m)\boxtimes\op1
(a-4)\to0$ which implies that $h^1(\cali_{Y,\p3}(a-4))=0$ since $a-5-m\ge0$ for $a\ge2m+4$ and $m\ge1$. Since
$\cali_{Y,\p3}(a-4)=\ker f$, it follows that

\begin{equation}\label{h0(f) is surj}
h^0(f):\ H^0(\op3(a-4))\to H^0(\calo_{Y'}(a-4))\ \ \ is\ surjective.
\end{equation}
On the other hand, since $a-3-m\ge m+1=h^0(Z)$, from the above condition (ii) on $Z$ it follows that 
$h^0(h):\ H^0(\op1(a-m-3)\boxtimes\op1(a-2))\to H^0(\calo_{Z}(a-3))$ is surjective. This together with \eqref{h0(f) is surj} and diagram \eqref{diagr 6} yields
that 
$h^0(g):\ H^0(\cali_{Y_0,\p3}(a-2))\to H^0(\calo_{Y'}(a-3))$
is surjective. Since $\ker g\simeq\cali_{Y,\p3}(a-2)$, it follows by \eqref{h1(IY0)} that
\begin{equation}\label{h1(IY)}
h^1(\cali_{Y,\p3}(a-2))=0.
\end{equation}
Now, twisting the triple \eqref{Serre-1} by $\op3(a-3)$ and
using \eqref{h1(IY)} we obtain $h^1(\cale_1(a-3))=0$, hence by Serre duality $h^2(\cale_1(-a))=0$. Besides, $h^1(\cale_1(a-3))=0$ clearly implies $h^1(\cale_1(a))=0$
since $a\ge2m+4.$ Now, by Semicontinuity, this yields
\eqref{vanish 1a} and \eqref{vanish 2a} for a general $[\cale_1]\in\calb_0(-1,2m)$. The same equalities are clearly true for $i=2$. 

Next, since $a\ge2$, it follows that $h^0(\calo_{C_i}(1-a))=0$ for any conic $C_i\subset Y$, hence the cohomology of the triple $0\to\cali_{Y,\p3}(1-a)\to\op3(1-a)\to
\oplus_{i=1}^{m+1}\calo_{C_i}(1-a)\to0$ yields $h^1(\cali_{Y,\p3}(1-a))=0$; this together with \eqref{Serre-1} and the Semicontinuity yields \eqref{vanish 2b} for $i=1$ and similarly for $i=2$.

Last, the equalities \eqref{vanish 3a} are proved similarly to \eqref{vanish 3}.
\end{proof}

\begin{remark}\label{rem B}
Note that, under the conditions of Proposition \ref{Prop 2}, the equalities (\ref{vanish 3a}) together with Riemann-Roch yield
\begin{equation}\label{h1 12 a}
h^1(\cale_1(1)\otimes\cale_2)=16m+8\varepsilon-6.
\end{equation}
\end{remark}

Now, to construct new series of components of $\calb(-1,4m+2\varepsilon)$, we proceed along the same lines as in Section \ref{section 3}. We first introduce the notion
of a twisted symplectic structure on a vector bundle.
By a \textit{twisted symplectic structure} on a vector bundle $E$ on $\p3$ we mean an isomorphism 
$\theta: E \xrightarrow{\sim}E^{\vee}(-1)$ such that $\theta^{\vee}(1)=-\theta$, considered modulo proportionality. (Here by definition $\theta^{\vee}(1):= \theta^{\vee}\otimes\mathrm{id}_{\op3(1)}$.) Clearly,
a vector bundle $E$ with twisted symplectic structure has even rank:\ \ $\rk E=2r,\ r\ge1.$

Consider the vector bundles $\cale_1$ and $\cale_2$ introduced in Proposition \ref{Prop 2}.
Since $\det\cale_1\simeq\det\cale_2\simeq\op3(-1)$, there are twisted symplectic structures $\theta_i:\ \cale_i\xrightarrow{\simeq}\cale_i^{\vee}(-1),\ i=1,2,$ which yield a twisted symplectic structure on the direct sum
$\mathbb{E}=\cale_1\oplus\cale_2$:
\begin{equation}\label{twisted sympl str}
\theta=\theta_1\oplus\theta_2:\ \mathbb{E}=\cale_1\oplus\cale_2 \xrightarrow{\simeq}\cale_1^{\vee}(-1)\oplus\cale_2^{\vee}(-1)=\mathbb{E}^{\vee}(-1).
\end{equation}

Now assume that $\cale_1$ and $\cale_2$ are chosen in such a way that there exist sections
\begin{equation}\label{empty intersectn a}
s_i:\ \op3\to\cale_i(a+1), \ \ \ s.t.\ \ \ \dim(s_i)_0=1,\ \ i=1,2,\ \ \ (s_1)_0\cap(s_2)_0=\emptyset.
\end{equation}
(Such $\cale_1\in\calb_0(-1,2m),\ [\cale_2]\in \calb_0(-1,2(m+\varepsilon))$ always exist, since already for $a=1$, hence also for $a\ge2$ two general bundles of the form \eqref{Serre-1} satisfy the property (\ref{empty intersectn a}).) The assumption (\ref{empty intersectn a}) implies that
the section $s=(s_1,s_2):\ \op3(-a-1)\to\mathbb{E}$
is a subbundle morphism, hence its transpose
${}^ts:=s^{\vee}(-1)\circ\theta:\ \mathbb{E}\to\op3(a)$
is an epimorphism. As $\theta$ in (\ref{twisted sympl str}) is twisted symplectic, the composition ${}^ts\circ s:\op3(-a-1)\to\op3(a)$ is also twisted symplectic. Therefore, since
$\op3(a)$ and $\op3(-a-1)$ are line bundles, it follows that ${}^ts\circ s=0$, i. e. the complex
\begin{equation}\label{monad a}
K^{\cdot}:\ \ \ 0\to\op3(-a-1)\xrightarrow{s}\mathbb{E}\xrightarrow{{}^ts}
\op3(a)\to0,\ \ \ \ \ E=\frac{\ker({}^ts)}{\mathrm{im}(s)},
\end{equation}
is a monad and its cohomology sheaf $E$ is locally free. Note that, since the bundles $\cale_1$ and $\cale_2$ are stable, they have zero spaces of global sections, hence also $h^0(\mathbb{E})=0$, and \eqref{monad a}
yields $h^0(E)=0$, i. e. $E$ as a rank 2 vector bundle with $c_1=-1$ is stable. Besides, since  $c_2(\mathbb{E})=c_2(\cale_1)+c_2(\cale_2)=4m+2\varepsilon$,
it follows from \eqref{monad a} that 
$c_2(E)=4m+2\varepsilon +a(a+1)$. Thus,
$$
[E]\in\calb(-1,4m+2\varepsilon +a(a+1)),
$$
and the deformation theory yields that, for any irreducible
component $\calm$ of $\calb(-1,4m+2\varepsilon +a(a+1))$,
$$
\dim\calm\ge1-\chi(\mathcal{E}nd~E)=8(4m+2\varepsilon +a(a+1))-5.
$$

Now, as in \eqref{sym of monad}, consider the
symmetric part of the total complex of the double komplex 
$K^{\cdot}\otimes (K^{\cdot})^{\vee}$, where $K^{\cdot}$ is the monad (\ref{monad a}):
\begin{equation}\label{sym of monad a}
0\to\mathbb{E}(-a)\xrightarrow{\alpha}S^2\mathbb{E}(1)
\oplus
\op3\xrightarrow{{}^t\alpha}\mathbb{E}(a+1)\to0,\ \ \ \ 
S^2E(1)=\frac{\ker({}^t\alpha)}{\mathrm{im}(\alpha)}.
\end{equation}
Here $\alpha$ is the induced subbundle map and $S^2E(1)$ is its cohomology sheaf. The monad (\ref{sym of monad a}) can be rewritten as a diagram of exact triples similar to \eqref{diagr 2}:
\begin{equation}\label{diagr 2a}
\xymatrix{
& & & 0 &\\
& & & \mathbb{E}(a+1)\ar[u]\\
 0\ar[r]& \mathbb{E}(-a)\ar^-\alpha[r] & S^2\mathbb{E}(1)\oplus\op3\ar[r]&
\mathrm{coker}\alpha\ar[u]\ar[r] & 0\\
& & & S^2E(1)\ar[u]&\\
& & & 0.\ar[u]& }
\end{equation}
Note that, by \eqref{vanish 2b} and \eqref{twisted sympl str} one has 
\begin{equation}\label{h1bbE(-a)=0 a}
h^1(\mathbb{E}(-a))=0,\ \ \ a\ge2,
\end{equation}
\begin{equation}\label{h2,3bbE(a)=0 a}
h^j(\mathbb{E}(a+1))=0,\ \ \ j=2,3,\ \ \ a\ge2m+3.
\end{equation}
Similarly, in view of \eqref{vanish 1a}, 
\begin{equation}\label{h1E(a)=0 a}
h^1(\mathbb{E}(a+1))=0,\ \ \ a\ge2m+3. 
\end{equation}
This together with \eqref{h2,3bbE(a)=0 a} and Riemann-Roch yields:
\begin{equation}\label{h0bbE(a)= a}
h^0(\mathbb{E}(a+1))=\chi(\mathbb{E}(a+1))=
4\binom{a+3}{3}+2\binom{a+3}{2}-(2m+\varepsilon)(2a+5).
\end{equation}

Next, 
\begin{equation}\label{decomp EndE a}
\mathcal{E}nd~\mathbb{E}\simeq\mathbb{E}(1)\otimes
\mathbb{E}\simeq S^2\mathbb{E}(1)\oplus\wedge^2\mathbb{E}(1), 
\end{equation}
and it follows from (\ref{twisted sympl str}) that
\begin{equation}\label{decomp S^2E a}
\begin{split}
& S^2\mathbb{E}(1)\simeq S^2\mathcal{E}_1(1)\oplus(\mathcal{E}_1(1)\otimes
\mathcal{E}_2)\oplus S^2\mathcal{E}_2(1),\\
& \wedge^2\mathbb{E}(1)\simeq \wedge^2\mathcal{E}_1(1)\oplus(\mathcal{E}_1(1)\otimes
\mathcal{E}_2)\oplus\wedge^2\mathcal{E}_2(1).
\end{split}
\end{equation}
Now, since $\mathcal{E}nd~\mathcal{E}_i\simeq\cale_i(1)\otimes
\cale_i\simeq S^2\cale_i(1)\oplus \wedge^2\cale_i(1),\ \wedge^2\cale_i\simeq\op3,\ i=1,2,$ it follows from \cite{JV} that
$h^1(\mathcal{E}nd~\mathcal{E}_1)\simeq h^1(S^2\cale_1(1))=16m-5,\ h^1(\mathcal{E}nd~\mathcal{E}_2)\simeq h^1(S^2\cale_2(1))=16(m+\varepsilon)-5,$ and 
$h^j(\mathcal{E}nd~\mathcal{E}_i)=h^j(S^2\cale_i(1))=0,\ i=1,2,\ j\ge2$. This together with \eqref{decomp EndE a}-(\ref{decomp S^2E a}), (\ref{vanish 1a}) and (\ref{h1 12 a}) implies that
\begin{equation}\label{h1 S2bbE a}
h^1(\mathcal{E}nd~\mathbb{E})=64m+32\varepsilon-22,\ \ \ 
h^1(S^2\mathbb{E}(1))=48m+24\varepsilon-16,
\end{equation}
\begin{equation*}\label{hi S2bbE a}
h^i(\mathcal{E}nd~\mathbb{E})=h^i(S^2\mathbb{E}(1))=0,\ \ \ i\ge2.
\end{equation*}
It follows from (\ref{vanish 2a a}) and
\eqref{twisted sympl str} that
\begin{equation}\label{h2E(-a)=0 a}
h^2(\mathbb{E}(-a))=0.
\end{equation}
Note that \eqref{h1bbE(-a)=0 a}, \eqref{h2E(-a)=0 a} and
\eqref{h1E(a)=0 a}, together with the diagram \eqref{diagr 2a} yield an equality $h^0(\mathrm{coker}\alpha)=1$ and an exact sequence:
$$
0\to H^0(\mathbb{E}(a+1))/\mathbb{C}\to
H^1(S^2E(1))\xrightarrow{\mu}H^1(S^2\mathbb{E}(1))\to0,
$$
hence by \eqref{h0bbE(a)= a} and \eqref{h1 S2bbE a} we have
\begin{equation}\label{h1 S2E a}
\begin{split}
& h^1(S^2E(1))=h^0(\mathbb{E}(a+1))+48m+24\varepsilon-17=\\
& 4\binom{a+3}{3}+2\binom{a+3}{2}-(2m+\varepsilon)(2a-19)
-17.
\end{split}
\end{equation}

Note that, since $E$ is a stable rank-2 bundle, $H^1(\cale nd~E)=H^1(S^2E(1))$ is isomorphic to the Zariski tangent space $T_{[E]}\calb(-1,4m+2\varepsilon +a(a+1))$:
\begin{equation}\label{Kod-Sp1 a}
\theta_{E}:\ T_{[E]}\calb(-1,4m+2\varepsilon +a(a+1))\simto
H^1(\cale nd~E)=H^1(S^2E(1)). 
\end{equation}
(Here $\theta_{E}$ is the Kodaira-Spencer isomorphism.)
Thus, 
we can rewrite \eqref{h1 S2E} as
\begin{equation}\label{dim Zar tang sp a}
\begin{split}
& \dim T_{[E]}\calb(-1,4m+2\varepsilon +a(a+1))=\\
& 4\binom{a+3}{3}+2\binom{a+3}{2}-(2m+\varepsilon)(2a-19)
-17.
\end{split}
\end{equation}
\begin{theorem}\label{Thm B}
For $m\ge1$, $\varepsilon\in\{0,1\}$ and $a\ge2(m+\varepsilon)+3$, there exists an irreducible family $\calm_n(E)\subset\calb(-1,n)$, where $n=4m+2\varepsilon +a(a+1)$, of dimension given by the right hand side of \eqref{dim Zar tang sp a} and containing the above constructed
point $[E]$. Hence the closure $\calm_n$ of $\calm_n(E)$ in
$\calb(-1,n)$ is an irreducible component of $\calb(-1,n)$.
The set $\Sigma_1$ of these components $\calm_n$ is an infinite series distinct from the series 
$\{\calb_0(-1,n)\}_{n\ge1}$ and from the series of Ein components described in \cite{Ein}. 
\end{theorem}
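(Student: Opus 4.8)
The plan is to transcribe, almost line by line, the proof of Theorem~\ref{Thm A}, everywhere replacing the symplectic structure by the twisted symplectic structure \eqref{twisted sympl str} and the sheaf $S^2\mathbb{E}$ by $S^2\mathbb{E}(1)$; the splitting $\cale nd~\mathbb{E}\simeq S^2\mathbb{E}(1)\oplus\wedge^2\mathbb{E}(1)$ of \eqref{decomp EndE a}, together with the vanishings \eqref{h1bbE(-a)=0 a}, \eqref{h2,3bbE(a)=0 a}, \eqref{h1E(a)=0 a}, \eqref{h2E(-a)=0 a}, the Euler characteristic \eqref{h0bbE(a)= a}, and the cohomology counts \eqref{h1 S2bbE a} and \eqref{h1 S2E a}, supply everything needed. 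First I would apply Bingener \cite[Appendix]{BH}: since $h^2(\cale nd~\mathbb{E})=0$, the bundle $\mathbb{E}$ admits a versal deformation $\boldsymbol{\cale}$ over a smooth pointed base $(B,0)$ with $\dim B=h^1(\cale nd~\mathbb{E})=64m+32\varepsilon-22$ whose Kodaira--Spencer map $T_0B\xrightarrow{\sim}H^1(\cale nd~\mathbb{E})$ is bijective. Then I would cut out the locally closed subset $U=\widetilde{U}\cap B^{*}\subset B$, where $\widetilde{U}$ is the locus of $b$ for which $E_b:=\boldsymbol{\cale}|_{\p3\times\{b\}}$ carries a twisted symplectic structure and $B^{*}$ is the open locus where $E_b$ satisfies the vanishings $h^0(E_b(-1))=h^1(E_b(-2))=h^2(E_b(-2))=h^3(E_b(-3))=0$ of a twisted symplectic instanton together with $h^0(E_b)=h^i(E_b(-a))=h^j(E_b(a+1))=h^k(S^2E_b(1))=0$ for $i=1,2,\ j\ge1,\ k=0,2,3$; all of these hold at $0$ by the vanishings just cited and by $h^0(S^2\mathbb{E}(1))=0$. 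Since $\cale nd~\mathbb{E}$ splits as above, the argument of \cite{R} carries over and shows $U$ is smooth with $T_0U\xrightarrow{\sim}H^1(S^2\mathbb{E}(1))$, whence $\dim U=h^1(S^2\mathbb{E}(1))=48m+24\varepsilon-16$.

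Next I would build the relative parameter space of subbundle inclusions $\op3(-a-1)\hookrightarrow E_b$. With $p:\p3\times B\to B$ the projection, Base Change and the vanishings defining $B^{*}$ make $\cala:=p_{*}(\boldsymbol{\cale}\otimes\op3(a+1)\boxtimes\calo_U)$ locally free of rank $h^0(\mathbb{E}(a+1))$ as in \eqref{h0bbE(a)= a}; hence $\pi:\widetilde{X}=\mathbf{Proj}(S^{\cdot}_{\op3}\cala^{\vee})\to U$ is a projective bundle carrying, exactly as in the proof of Theorem~\ref{Thm A}, a tautological morphism $\op3\boxtimes\calo_{\widetilde{X}/U}(-1)\to\boldsymbol{\cale}\otimes\op3(a+1)$ (pulled back to $\p3\times\widetilde{X}$), and the locus $X\subset\widetilde{X}$ over which the transpose of this morphism is fibrewise surjective is open, dense, smooth and irreducible, since it contains the point $x_0$ given by the section $s$ of \eqref{empty intersectn a}. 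Moreover $X$ is an open subset of a relative Quot-scheme $\Quot_{\p3\times B/B}(\boldsymbol{\cale},P)$, so by \cite[Prop.~2.2.7]{HL} and \eqref{h1E(a)=0 a} there is a short exact sequence $0\to H^0(\mathbb{E}(a+1))/\mathbb{C}\to T_{x_0}X\xrightarrow{d\pi}T_0B\to0$, obtained as the cohomology sequence of $0\to\calh om(F,\mathbb{E})\to\cale nd~\mathbb{E}\to\mathbb{E}(a+1)\to0$, the triple got by applying $\calh om(-,\mathbb{E})$ to $0\to\op3(-a-1)\xrightarrow{s}\mathbb{E}\to F\to0$ with $F:=\coker(s)$.

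Now I would set $Y=U\times_{B}X$, restrict the last sequence to the bottom row $0\to H^0(\mathbb{E}(a+1))/\mathbb{C}\to T_{x_0}Y\xrightarrow{d\pi}T_0U\to0$ (using that $\boldsymbol{\cale}|_{\p3\times U}$ is a versal family of twisted symplectic instantons), and study the modular morphism $\Phi:Y\to\calb:=\calb(-1,n)$, $(b,s)\mapsto[\ker({}^ts)/\im(s)]$, with $n=4m+2\varepsilon+a(a+1)$. Composing $d\Phi$ with the Kodaira--Spencer isomorphism $\theta_E$ of \eqref{Kod-Sp1 a} gives $\phi=\theta_E\circ d\Phi:T_{x_0}Y\to H^1(S^2E(1))$, and functoriality of the Kodaira--Spencer maps applied to \eqref{diagr 2a} (exactly as in the analogous diagram in the proof of Theorem~\ref{Thm A}) places $\phi$ in a commutative ladder whose bottom row is the one just displayed, whose top row is the exact sequence $0\to H^0(\mathbb{E}(a+1))/\mathbb{C}\to H^1(S^2E(1))\xrightarrow{\mu}H^1(S^2\mathbb{E}(1))\to0$ preceding \eqref{h1 S2E a}, whose left vertical arrow is the identity and whose right vertical arrow is the isomorphism $\theta:T_0U\xrightarrow{\sim}H^1(S^2\mathbb{E}(1))$; the five lemma then forces $\phi$ to be an isomorphism. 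Since $Y$ is smooth at $x_0$ and irreducible, $\calm_n(E):=\Phi(Y)$ is therefore open in an irreducible component $\calm_n$ of $\calb(-1,n)$ of dimension equal to the right-hand side of \eqref{dim Zar tang sp a}. Letting $a\to\infty$ with $(m,\varepsilon)$ fixed and $a\ge2(m+\varepsilon)+3$ yields infinitely many distinct $n$, so $\Sigma_1$ is infinite; a direct computation with \eqref{dim Zar tang sp a} shows $\dim\calm_n>8n-5=\dim\calb_0(-1,n)$ for all admissible $(m,\varepsilon,a)$, so $\Sigma_1$ is disjoint from the Hartshorne series $\{\calb_0(-1,n)\}_{n}$, while $\Sigma_1$ is separated from Ein's series \cite{Ein} by comparing the spectra of general members, exactly as for $\Sigma_0$ in Theorem~\ref{Thm A}.

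The step I expect to be the real obstacle --- as it already is in Theorem~\ref{Thm A} --- is the deformation-theoretic input: checking that the locus $\widetilde{U}$ of \emph{twisted} symplectic bundles is smooth at $[\mathbb{E}]$ with tangent space precisely $H^1(S^2\mathbb{E}(1))$, so that the spurious summand $H^1(\wedge^2\mathbb{E}(1))$ genuinely drops out, i.e.\ that the argument of \cite{R} survives the twisting of the self-duality, and then assembling the Kodaira--Spencer ladder carefully enough that $\phi$ comes out an isomorphism rather than merely injective or surjective. Apart from that, the proof is a routine rewriting of Section~\ref{section 3}; the only new bookkeeping is to carry the twist $(1)$ consistently through the double complex $K^{\cdot}\otimes(K^{\cdot})^{\vee}$ and its symmetric part \eqref{sym of monad a}.
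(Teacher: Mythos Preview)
Your proposal is correct and follows exactly the approach the paper takes: the paper's own proof of Theorem~\ref{Thm B} consists of the single sentence ``The proof of this Theorem is completely parallel to the proof of Theorem~\ref{Thm A}, with clear modifications due to the change from $c_1(E)=0$ to $c_1(E)=-1$,'' followed immediately by the dimension inequality $\dim\calm_n>8n-5$ and the spectrum comparison with Ein's series that you also outline. Your detailed transcription is precisely what the paper leaves implicit; the only cosmetic slip is that the instanton-type vanishings $h^0(E_b(-1))=h^1(E_b(-2))=\cdots$ you impose in $B^{*}$ are carried over verbatim from the $c_1=0$ case and are neither needed nor used in Section~\ref{section 4}---the conditions that matter are exactly the remaining ones you list.
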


The proof of this Theorem is completely parallel to the proof of Theorem \ref{Thm A}, with clear modifications due to the change from $c_1(E)=0$ to  $c_1(E)=-1$.

It is easy to check that the dimension $\dim\calm_n$ given by \eqref{dim Zar tang sp a}, with $m,\varepsilon$ and $a$ 
as in Theorem \ref{Thm B}, satisfies the strict inequality $\dim\calm_n>8n-5=\dim\calb_0(-1,n)$ (cf. \eqref{dim B0}). 
This shows that $\Sigma_1$ is distinct from $\{\calb_0(-1,n)\}_{n\ge1}$. To distinguish $\Sigma_1$ from
from the series of Ein components, it is enough to see that
the spectra of general bundles of these two series are different. (A direct verification of this fact is left to the reader.) 

\begin{remark}\label{Rem B1}
Let $\caln$ be the set of all values of $n$ for which $\calm_n\in\Sigma_1$, i. e.
$$
\caln=\{n\in2\mathbb{Z}_+\ |\ n=4m+2\varepsilon+a(a+1),\ where\ m\in\mathbb{Z}_+,\ \varepsilon\in\{0,1\},\ a\ge2m+\varepsilon+3 \},
$$
Then one easily sees that
$$
\lim\limits_{r\to\infty}\frac{\caln\cap\{2,4,...,2r\}}{r}=1.
$$
\end{remark}

\vspace{5mm}

\section{Examples of moduli components of stable vector bundles with small values of $c_2$}\label{section 5}

The conditions imposed on the data $(m,\varepsilon,a)$ in Theorem \ref{Thm A}, respectively, Theorem \ref{Thm B} may not be satisfied for small values of these data. However, for some of small values of $(m,\varepsilon,a)$ the equalities \eqref{vanish 1}, \eqref{vanish 2}, \eqref{vanish 2a}, respectively, \eqref{vanish 1a}, \eqref{vanish 2a a}, \eqref{vanish 2b} are still true. 
Hence, our construction of irreducible components $\calm_n\in\Sigma_0$, where $n=2m+\varepsilon+a^2$, respectively,$\calm_n\in\Sigma_1$, where $n=4m+2\varepsilon+a(a+1)$, 
given in Sections \ref{section 3} and \ref{section 4} is still true for these values of $(m,\varepsilon,a)$. 
A precise computation of these values is performed via using the Serre construction \eqref{trC}, respectively, \eqref{Serre-1} for the pairs $([\cale_1],[\cale_2])$ from \eqref{2 inst}, respectively, from \eqref{2 inst a}. We thus provide the following list of irreducible components $\calm_n\in\Sigma_0$ for $n\le20$ and, respectively, $\calm_n\in\Sigma_1$ for $n\le40$.

\subsection{Components $\calm_n\in\Sigma_0$ for $n\le20$}
By $\mathrm{Spec}(E)$ we denote the spectrum
of a general bundle $E$ from $\calm_n$. (Below we use a standard notation $\mathrm{Spec}(E)=
(a^p,b^q,...)$ for the spectrum ($\underset{p}
{\underbrace{a......a}},~\underset{q}
{\underbrace{b......b}},...$).)

(1) $n=6,\ (m,\varepsilon,a)=(1,0,2)$. $\calm_6$ is a component of the expected (by the deformation theory) dimension $\dim\calm_6=45$, and  
$\mathrm{Spec}(E)=(-1,0^4,1)$. This corresponds to the case
6(2) of the Table 5.3 of Hartshorne-Rao \cite{HR}.

(2) $n=7,\ (m,\varepsilon,a)=(1,1,2)$. $\calm_7$ is a component of the expected dimension $\dim\calm_7=53$, and  
$\mathrm{Spec}(E)=(-1,0^5,1)$ (cf. \cite[Table 5.3, 7(2)]{HR}). 

(3) $n=8,\ (m,\varepsilon,a)=(2,0,2)$. $\calm_8$ is a component of the expected dimension $\dim\calm_8=61$, and  
$\mathrm{Spec}(E)=(-1,0^6,1)$ (cf. \cite[Table 5.3, 8(2)]{HR}). 

(4) $n=9,\ (m,\varepsilon,a)=(2,1,2)$. $\calm_9$ is a component of the expected dimension $\dim\calm_9=69$, and  
$\mathrm{Spec}(E)=(-1,0^7,1)$. 

(5) $n=10,\ (m,\varepsilon,a)=(3,0,2)$. $\calm_{10}$ is a component of the expected dimension $\dim\calm_{10}=77$, and  
$\mathrm{Spec}(E)=(-1,0^8,1)$. 

(6) $n=11,\ (m,\varepsilon,a)=(3,1,2)$. $\calm_{11}$ is a component of the expected dimension $\dim\calm_{11}=85$, and  
$\mathrm{Spec}(E)=(-1,0^9,1)$. 

(7) $n=12,\ (m,\varepsilon,a)=(4,0,2)$. $\calm_{12}$ is a component of the expected dimension $\dim\calm_{12}=93$, and  
$\mathrm{Spec}(E)=(-1,0^{10},1)$. 

(8) $n=18,\ (m,\varepsilon,a)=(1,0,4)$. $\calm_{18}$ is a component of the expected dimension $\dim\calm_{12}=141$, and  
$\mathrm{Spec}(E)=(-3,-2^2,-1^3,0^6,1^3,2^2,3)$. 

\subsection{Components $\calm_n\in\Sigma_1$ for $n\le40$}
${}$\\
(1) $n=24,\ (m,\varepsilon,a)=(1,0,4)$. $\calm_{24}$ is a component of the expected dimension $\dim\calm_{24}=187$, and  
$\mathrm{Spec}(E)=(-4,-3^2,-2^3,-1^6,0^6,1^3,2^2,3)$. 

(2) $n=34,\ (m,\varepsilon,a)=(1,0,5)$. $\calm_{34}$ is a component of dimension $\dim\calm_{34}=281$ larger than expected, and  
$\mathrm{Spec}(E)=(-5,-4^2,-3^3,-2^4,-1^7,0^7,1^4,2^3,3^2,4)$. 

(3) $n=36,\ (m,\varepsilon,a)=(1,1,5)$. $\calm_{36}$ is a component of dimension $\dim\calm_{34}=281$ larger than expected, and  
$\mathrm{Spec}(E)=(-5,-4^2,-3^3,-2^4,-1^8,0^8,1^4,2^3,3^2,4)$. 

(4) $n=38,\ (m,\varepsilon,a)=(2,0,5)$. $\calm_{38}$ is a component of the expected dimension $\dim\calm_{36}=299$, and  
$\mathrm{Spec}(E)=(-5,-4^2,-3^3,-2^4,-1^9,0^9,1^4,2^3,3^2,4)$.


\end{document}